\theoremstyle{plain}
\newtheorem{thm}{Theorem}[section]
\newtheorem{lemma}[thm]{Lemma}
\newtheorem{lem}[thm]{Lemma}
\newtheorem{prop}[thm]{Proposition}
\newtheorem{cor}[thm]{Corollary}
\theoremstyle{definition}
\newtheorem{defn}[thm]{Definition}
\newtheorem{example}[thm]{Example}
\theoremstyle{remark}
\newtheorem{rem}[thm]{Remark}
\numberwithin{equation}{section}
\newcommand{\abs}[1]{\lvert #1 \rvert}
\def\Hom{\mathrm{Hom}}
\def\Div{\mathrm{Div}}
\def\makeop#1{\expandafter\def\csname#1\endcsname
  {\mathop{\rm #1}\nolimits}\ignorespaces}
\def\makebb#1{\expandafter\def
  \csname bb#1\endcsname{{\mathbb{#1}}}\ignorespaces}
\def\makebf#1{\expandafter\def\csname bf#1\endcsname{{\bf
      #1}}\ignorespaces} 
\def\makegr#1{\expandafter\def
  \csname gr#1\endcsname{{\mathfrak{#1}}}\ignorespaces}
\def\makescr#1{\expandafter\def
  \csname scr#1\endcsname{{\EuScript{#1}}}\ignorespaces}
\def\makecal#1{\expandafter\def\csname cal#1\endcsname{{\mathcal
      #1}}\ignorespaces} 
\def\doLetters#1{#1A #1B #1C #1D #1E #1F #1G #1H #1I #1J #1K #1L #1M
                 #1N #1O #1P #1Q #1R #1S #1T #1U #1V #1W #1X #1Y #1Z}
\def\doletters#1{#1a #1b #1c #1d #1e #1f #1g #1h #1i #1j #1k #1l #1m
                 #1n #1o #1p #1q #1r #1s #1t #1u #1v #1w #1x #1y #1z}
     \def\qed{\qedmark\medbreak}%
\def\qedmark{{\enspace\vrule height 6pt width 5pt depth 1.5pt}}%
    \def\setminus{\smallsetminus}
\def\wh{\widehat}
\def\wt{\widetilde}
\def\Ind{{\rm Ind}}
\def\G{\mathbb{G}}
\def\Q{\mathbb{Q}}
\def\Z{\mathbb{Z}}
\def\A{\mathbb{A}}
\def\gal{\text{Gal}}
\def\embed{\hookrightarrow}
\def\F{\bbF}
\newcommand{\isoto}{\stackrel{\sim}{\to}}
\newcommand{\xdashrightarrow}[2][]
  {\ext@arrow 0359\rightarrowfill@@{#1}{#2}}
\newcommand{\xdashleftarrow}[2][]
  {\ext@arrow 3095\leftarrowfill@@{#1}{#2}}
\newcommand{\xdashleftrightarrow}[2][]{\ext@arrow 3359\leftrightarrowfill@@{#1}{#2}}
\def\rightarrowfill@@{\arrowfill@@\relax\relbar\rightarrow}
\def\leftarrowfill@@{\arrowfill@@\leftarrow\relbar\relax}
\def\leftrightarrowfill@@{\arrowfill@@\leftarrow\relbar\rightarrow}
\def\arrowfill@@#1#2#3#4{%
  $\m@th\thickmuskip0mu\medmuskip\thickmuskip\thinmuskip\thickmuskip
   \relax#4#1
   \xleaders\hbox{$#4#2$}\hfill
   #3$%
}
\DeclareSymbolFont{cyrletters}{OT2}{wncyr}{m}{n}
\DeclareMathSymbol{\Sha}{\mathalpha}{cyrletters}{"58}
\def\Gmk{\G_{{\rm m}, k}}
\def\GmK{\G_{{\rm m}, K}}
\begin{document}

\title[Class numbers of multinorm-one tori]{Class numbers of multinorm-one tori}

\author{Fan-Yun Hung}
\address{(Hung) Department of Mathematics, UCLA, 
520 Portola Plaza, Los Angeles, CA 90095-1555, USA}
\email{hfy880916@gmail.com}



\author{Chia-Fu Yu}
\address{(Yu) Institute of Mathematics, Academia Sinica and NCTS,
6F Astronomy Mathematics Building, No.~1, Roosevelt Rd. Sec.~4  
Taipei, Taiwan, 106319}
\email{chiafu@math.sinica.edu.tw}


\date{\today}
\subjclass[2010]{14K22, 11R29.} 
\keywords{Class numbers, multinorm-one tori.}  

\maketitle

\begin{abstract}
We present a formula for the class number of a multinorm one torus $T_{L/k}$ associated to any \'etale algebra $L$ over a global field $k$. This is deduced from a formula for analogues of invariants introduced by T.~Ono, which are interpreted as a generalization of Gauss genus theory. This paper includes the variants of Ono's invariant for arbitrary $S$-ideal class numbers and the narrow version, generalizing results of Katayama, Morishita, Sasaki and Ono.   
\end{abstract}

\section{Introduction}
\label{sec:I}

Let $K/k$ be a finite extension of number fields. Ono
\cite{ono:genus85,ono:nagoya1987} introduced the
following alternating products of class numbers
\begin{equation}
  \label{eq:I.1}
  E(K/k)\coloneqq \frac{h(K)}{h(k)\cdot h(R_{K/k}^{(1)} \GmK)} \quad
  \text{and} \quad  
  E^+(K/k)\coloneqq \frac{h^+(K)}{h^+(k)\cdot h^+(R_{K/k}^{(1)}
  \GmK )}, 
\end{equation}
where $R_{K/k}$ denotes the Weil restriction of scalars from $K$ to
$k$, $R_{K/k}^{(1)} \GmK \subset R_{K/k} \GmK $ is the norm one torus, and $h$ (resp.~$h^+$) denotes the class number (resp.~the narrow class number). 
When the extension $K/k$ is Galois, Ono computed these invariants in terms of cohomological invariants \cite[Section 2, Theorem, p.~123]{ono:nagoya1987} and 
thus gave a class number relation among $h(K)$, $h(k)$ and $h(R_{K/k}^{(1)}  \GmK)$, as well as their narrow variants. Particularly, this yields a formula for the class number $h(R_{K/k}^{(1)}  \GmK)$ of the norm one torus $R_{K/k}^{(1)}  \GmK$. Restricted to the special case where $K$ is a CM field and $k=K^+$ is its maximal totally real subfield, Ono's formula gives an alternative proof of the following class number formula (see \cite[(16), p.~375]{Shyr-class-number-relation})
\begin{equation}\label{eq:CM}
    h(R_{K/K^{+}}^{(1)}  \GmK)=\frac{h_{K}}{h_{K^+}} \frac{1}{Q_{K/K^+} \cdot 2^{t-1}}, 
\end{equation}
where $Q_{K/K^+}=[O_K^\times: \mu_K O_{K^+}^\times]$ is the Hasse unit index and $t$ is the number of finite places of $K^+$ ramified in $K$. 
This formula was applied to compute the number of polarized CM complex abelian varieties in \cite{guo-sheu-yu:CM}.

Ono observed that the class number relation deduced from $E^+(K/k)$ generalizes Gauss's theorem on the genera of quadratic forms. For example, in the simplest case where 
$K$ is any quadratic extension of $k=\Q$, Ono's formula for $E^+(K/k)$ reads \[ h^+(K)=h_K^* \cdot 2^{t-1}, \] where $t$ is the number of rational primes ramified in $K$ and $h_K^*$ is the class number of
any genus in the narrow ideal class group $\Cl^+(K)$. On the other hand, when $K/k$ is any cyclic Kummer extension, Ono's formula shows a direct relation with the ambiguous class number for $K/k$; see \cite[Equation (10)]{ono:genus85} for details. We refer for a few explorations of ambiguous class numbers to 
\cite[Chapter XIII, Section 4]{lang-CycloF}, \cite{gras:73} and \cite{li-yu:gras-chevalley}.

There are generalizations and extensions of Ono's work by several other authors. 
In~\cite{sasaki:nagoya88} Sasaki gave a more direct proof of Ono's formulas which avoids $K$-theory. The formulas were generalized by Katayama~\cite{katayama:nagoya1989,katayama:kyoto1991} for any finite extension $K/k$ using Ono-Shyr's formula \cite{Shyr-class-number-relation} for isogenous tori. 
Morishita \cite{morishita:nagoya1991} generalized Ono's formula to the $S$-arithmetical setting including the function field case (still, as Sasaki and Ono, assuming $K/k$ Galois). He adopted a new approach using Nisnevich cohomology and gave a different approach. As another generalization, Morishita also showed a formula for the Ono invariant associated to the product $K_1\times K_2$ of two linearly disjoint Galois extensions $K_1$ and $K_2$, relating to  H\"urlimann's result \cite{hurlimann} on the Hasse norm principle for $K_1\times K_2$. \\ 


In this paper we generalize the results of Ono, Sasaki, Katayama and Morishita to an arbitrary \'etale algebra over any global function $k$, including an arbitrary $S$-arithmetical setting (i.e. $S$ is nonempty or not). Our approach is close to that of Sasaki, which does not rely on  $K$-theory nor the Nisnevich cohomology and is more elementary.   

Let $L=\prod_{i=1}^r K_i$ be an \'etale algebra over a global field $k$ with finite separable field extensions $K_i/k$, and let
$N_{L/k}=\prod_{i=1}^r N_{K_i/k}$ be the norm map from $L$ to $k$. 
Set 
\[ T_{L/k}:=\ker (N_{L/k}: T^L:=R_{L/k} \G_{{\rm m}, L}\to \Gmk),\]
called the \emph{multinorm-one torus} $T_{L/k}$ associated to $L/k$.
For simplicity, we write $N$ for $N_{L/k}$.

Let $\A_k$ and $\A_L:=\prod_{i} \A_{K_i}$ be the adele rings of $k$ and $L$, respectively.
Let $S$ be a nonempty finite set of places of
$k$ which contains all archimedean places if $k$ is a number field.
Let $\A_{k,S}$ and $\A_{L,S}$ be the $S$-adele rings of $k$ and $L$, respectively; see \eqref{eq:Sadele}.   
Let $O_{k,S} \coloneqq k\cap \A_{k,S}$ and $O_{L,S} \coloneqq L\cap \A_{L,S}$ be the
$S$-rings of integers in $k$ and $L$, respectively. 
Denote by $U_{k,S} \coloneqq \A_{k,S}^\times$ (resp.~$U_{L,S} \coloneqq \A_{L,S}^\times$) the unit group of $\A_{k,S}$ (resp.~$\A_{L,S}$). 

For any $k$-torus $T$, let 
\[  \Cl_S(T) \coloneqq \frac{T(\A_k)}{T(k) U_{T,S}} \quad \text{ and }  h_S(T):=|\Cl_S(T)| \]  denote 
the \emph{$S$-class group} and \emph{$S$-class number} 
of $T$, respectively; see \eqref{eq:Cl_S(T)}. If $k$ is a number field, we let 
$\Cl^+_S(T)$ and $h^+_S(T)$ denote the \emph{narrow $S$-class group} and 
\emph{narrow $S$-class number} 
of $T$, respectively; see \eqref{eq:Cl^+_S(T)}. 
Following Ono, we define the following alternating products:
\begin{equation}
  \label{eq:I.2}
E_S(L/k) \coloneqq \frac{h_S(L)}{h_S(k) h_S(T_{L/k})} \quad \text{and} 
\quad  E^+_S(L/k)\coloneqq \frac{h^+_S(L)}{h_S^+(k) h^+_S(T_{L/k})},
\end{equation}
where $h_S^{(+)}(L):=h_S^{(+)}(T^L)$ and $h_S^{(+)}(k):=h_S^{(+)}(\Gmk)$ are the (narrow) $S$-class numbers of $L$ and $k$, respectively. 

In the case where $k$ is a global function field and $S=\emptyset$, the class group $\Cl_\emptyset (T)=:\Cl(T)$ of a $k$-torus $T$ may be infinite. So instead we consider the class group $\Cl^0(T)\subset \Cl(T)$ of degree zero of $T$ (see \eqref{eq:Cl0T}) and the class number $h^0(T):=|\Cl^0(T)|$ of degree zero. Set
\begin{equation}
  \label{eq:I.3}
E^0(L/k) \coloneqq \frac{h^0(L)}{h^0(k) h^0(T_{L/k})},   
\end{equation}
where $h^0(L) \coloneqq h^0(T^L)$, $h^0(k) \coloneqq \abs{\A_k^{\times, 0}/k^{\times} \cdot U_k}$ and $U_k=\prod_{v} O_{k_v}^\times$. Let $U_L:=\prod_{i=1}^r U_{K_i}^\times$ be the unit subgroup of $\A_L^\times$. 

To describe our main results, we set some more notation. 
Let 
\begin{equation}
    \Sha(L/k):=\frac{k^{\times} \cap N(\A_{L}^{\times})} {N(L^\times)}
\end{equation}
denote the Tate-Shafarevich group of $L/k$. For any map $\alpha:A\to B$ of abelian groups, the $q$-symbol of $\alpha$ is defined by 
\begin{equation}\label{eq:q}
    q(\alpha):=\frac{|\coker \alpha|}{|\ker \alpha|}
\end{equation}
if both 
$\coker \alpha$ and $\ker \alpha$ are finite. If $k$ is a number field, 
we refer to \eqref{eq:TAk+} for the definition of subgroups $\A_k^{\times,  +}\subset \A_k^\times$ and $\A_L^{\times,+}\subset \A_L^{\times}$, and for any subgroup $A\subset \A_k^\times$ (resp.~$A\subset \A_L^\times$), set $A^+:=A\cap \A_{k}^{\times,+}$ (resp. $A^+:=A\cap \A_{L}^{\times,+}$).

Our main results give formulas for the invariants
$E_S(L/k)$, $E_S^+(L/k)$ and $E^0(L/k)$: 

\begin{thm}[Theorems~\ref{thm:ESLk} and \ref{thm:E0Lk}]\label{Main:1}
  Let $L$ be an \'etale algebra over a global field $k$. 
\begin{enumerate}
    \item When $S$ is nonempty, we have
  \begin{equation}
    \label{eq:I.4}
   E_S(L/k) = \frac{\abs{\Sha(L/k)}}{[L_{ab}:k]}  \cdot \frac{[U_{k, S}: N(U_{L, S})]}{[ {O}_{k, S}^{\times}: N( {O}_{L, S}^{\times})]}
  \end{equation}
and 
  \begin{equation}
    \label{eq:I.5}
   E_S^+(L/k) = \frac{\abs{\Sha(L/k)}}{[L_{ab}:k] \cdot q(\phi)}  \cdot \frac{[U_{k, S}: N(U_{L, S})]}{[ {O}_{k, S}^{\times +}: N( {O}_{L, S}^{\times +})]},
  \end{equation}
   where $L_{ab}$ is the maximal abelian extension of $k$ that is
   contained in all $K_i$ and $\phi: k^{\times +}/N (L^{\times +}) \to k^{\times} / N (L^{\times})$ is the canonical homomorphism.  

   \item When $k$ is a global function field and $S$ is empty, we have 
   \begin{equation}
       \label{eq:I.6}
          E^0(L/k) = \frac{\abs{\Sha(L/k)}}{[L_{ab}: k]} \cdot q(\phi^0) \cdot \frac{[U_k : N(U_L)]}{[\mathbb{F}_q^{\times} : N(\prod_i \mathbb{F}_{q_i}^{\times})]},         
   \end{equation}
   where $\phi^0: \A_k^{\times, 0}/N(\A_L^{\times, 0}) \to \A_k^{\times}/N(\A_L^{\times})$ is the canonical homomorphism.  
  \end{enumerate}  
\end{thm}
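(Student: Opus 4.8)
The plan is to extract all three formulas from the fundamental short exact sequence of $k$-tori
\[ 1 \to T_{L/k} \to T^L \xrightarrow{N} \Gmk \to 1, \]
which is exact, the right-hand surjectivity holding because already one factor $N_{K_1/k}$, hence the multinorm $N=\prod_i N_{K_i/k}$, is surjective as a morphism of tori. Taking $k$-points, $\A_k$-points, and $S$-integral unit groups, and using that the formation of points of a kernel torus is left exact at every place, I would record the idelic descriptions $\Cl_S(T^L) = \A_L^\times / L^\times U_{L,S}$ and $\Cl_S(\Gmk) = \A_k^\times / k^\times U_{k,S}$, together with
\[ \Cl_S(T_{L/k}) = \A_L^{\times, N=1}\big/\big(L^{\times,N=1}\, U_{L,S}^{N=1}\big), \qquad \A_L^{\times,N=1} := \ker\big(N\colon \A_L^\times \to \A_k^\times\big), \]
and likewise $L^{\times,N=1} = \ker(N\colon L^\times \to k^\times)$, $U_{L,S}^{N=1} = \ker(N\colon U_{L,S}\to U_{k,S})$. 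The norm descends to $\bar N\colon \Cl_S(T^L)\to\Cl_S(\Gmk)$, and the inclusion of norm-one ideles gives $\iota\colon \Cl_S(T_{L/k})\to\Cl_S(T^L)$ whose image lies inside $\ker\bar N$.

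The core is then the Euler characteristic of the three-term complex $\Cl_S(T_{L/k})\xrightarrow{\iota}\Cl_S(T^L)\xrightarrow{\bar N}\Cl_S(\Gmk)$: counting kernels, images and cokernels gives
\[ E_S(L/k) = \frac{\abs{\ker\bar N / \im\iota}}{\abs{\ker\iota}\cdot\abs{\coker\bar N}}, \]
so the whole problem reduces to computing the homology group $H := \ker\bar N/\im\iota$, the kernel $\ker\iota$, and the cokernel $\coker\bar N = \A_k^\times/(k^\times U_{k,S}\, N(\A_L^\times))$. I would evaluate these by a snake-lemma argument applied to the inclusion $L^\times U_{L,S}\hookrightarrow\A_L^\times$ and its image under $N$, which isolates the discrepancy $\ker(N\colon L^\times U_{L,S}\to\A_k^\times)/(L^{\times,N=1}U_{L,S}^{N=1})$ controlling $\ker\iota$ and $H$.

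The three homology groups are then matched with the four factors in \eqref{eq:I.4}. The cokernel $\coker\bar N$ is a quotient of $\A_k^\times/(k^\times N(\A_L^\times))$, which multinorm reciprocity identifies with $\Gal(L_{ab}/k)$ for $L_{ab}=\bigcap_i K_{i,ab}$, of order $[L_{ab}:k]$; dividing out the image of $U_{k,S}$ introduces the local unit index $[U_{k,S}:N(U_{L,S})]$. The groups $\ker\iota$ and $H$ are evaluated through the exact sequence
\[ 1 \to \Sha(L/k) \to k^\times/N(L^\times) \to \A_k^\times/N(\A_L^\times) \to \A_k^\times/(k^\times N(\A_L^\times)) \to 1 \]
together with the norm sequence on global $S$-units, producing the remaining factors $\abs{\Sha(L/k)}$ and $[O_{k,S}^\times:N(O_{L,S}^\times)]$. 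For \eqref{eq:I.5} and \eqref{eq:I.6} I would rerun the same scheme with the narrow idele subgroups $\A_k^{\times,+},\A_L^{\times,+}$, respectively the degree-zero subgroups $\A_k^{\times,0},\A_L^{\times,0}$ in the function field case; comparing the restricted norm with the full norm across the inclusion of these subgroups by one further snake lemma contributes the extra factor $q(\phi)$, respectively $q(\phi^0)$.

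The hard part will be the bookkeeping in the central diagram. The map $\iota$ is neither injective nor has image exactly $\ker\bar N$, and the delicate point is to show that the two resulting discrepancies split cleanly into the Tate--Shafarevich factor and the global $S$-unit index rather than mixing. A second subtlety is the multinorm class field theory for the reducible algebra $L=\prod_i K_i$: one must check that the product of local norm groups $\prod_i N_{K_i/k}(\A_{K_i}^\times)$ corresponds to the intersection $\bigcap_i K_{i,ab}$, not to a compositum, so that the factor is exactly $[L_{ab}:k]$. Finally, the finiteness of every quotient entering the Euler-characteristic formula must be established before the counting argument is legitimate, which in the $S=\emptyset$ function-field case is precisely what forces the passage to degree-zero class groups and the appearance of $q(\phi^0)$.
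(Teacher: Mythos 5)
Your plan is correct in substance and is, at bottom, the paper's own (Sasaki-style) argument in different packaging. Both proofs are elementary snake-lemma index bookkeeping hung on the sequence $1\to T_{L/k}\to T^L\xrightarrow{N}\Gmk\to 1$, with the same two external inputs you name: the identification $\A_k^{\times}/k^{\times} N(\A_L^{\times})\simeq \Gal(L_{ab}/k)$ (Theorem~\ref{thm: L_ab over k}; your observation that the product of the norm groups corresponds to the \emph{intersection} $\bigcap_i K_{i,\mathrm{ab}}$ rather than a compositum is exactly the paper's remark following that theorem), and the four-term exact sequence $1\to\Sha(L/k)\to k^{\times}/N(L^{\times})\to \A_k^{\times}/N(\A_L^{\times})\to \A_k^{\times}/k^{\times} N(\A_L^{\times})\to 1$, i.e.\ $q(\tilde a)=[L_{ab}:k]/\abs{\Sha(L/k)}$. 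Where you differ is organization: you take the Euler characteristic of the complex $\Cl_S(T_{L/k})\xrightarrow{\iota}\Cl_S(L)\xrightarrow{\bar N}\Cl_S(k)$ (your identity $E_S(L/k)=\abs{\ker\bar N/\im\iota}\,/\,(\abs{\ker\iota}\cdot\abs{\coker\bar N})$ is a correct count) and propose to compute the three homology groups purely idelically, whereas the paper first computes $h_S(T_{L/k})$ ideal-theoretically (Proposition~\ref{prop: Sasaki 2}) and then evaluates $q(a)$ for $a\colon k^{\times}/N(L^{\times})\to\A_k^{\times}/N(\A_L^{\times})$ in two ways. These are arithmetically equivalent reorganizations of the same cancellation; your version avoids the ideal groups $I_{L,S}$, $P_{L,S}$ entirely, a small structural gain.

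Two concrete gaps remain. First, the central bookkeeping you defer is where the theorem actually lives, and the clean matching of factors to homology groups that you gesture at is not literal: $\abs{\coker\bar N}=[\A_k^{\times}:k^{\times} U_{k,S}N(\A_L^{\times})]=[L_{ab}:k]\big/[U_{k,S}:U_{k,S}\cap k^{\times} N(\A_L^{\times})]$, and the latter index is \emph{not} $[U_{k,S}:N(U_{L,S})]$; likewise your homology group satisfies $H\simeq \bigl(k^{\times} U_{k,S}\cap N(\A_L^{\times})\bigr)/N(L^{\times})N(U_{L,S})$, which mixes global and local data. The intermediate indices $[U_{k,S}\cap N(\A_L^{\times}):N(U_{L,S})]$ and $[O_{k,S}^{\times}\cap N(L^{\times}):N(O_{L,S}^{\times})]$ occur across all three homology groups and only cancel after recombination --- this is precisely what Proposition~\ref{prop: Sasaki 2} together with the diagram chases $q(a)=q(a')q(a'')$ and $q(a')=q(b)/q(b')$ in the paper's proof of Theorem~\ref{thm:ESLk} accomplish. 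So your attempt is incomplete but completable, and you correctly flagged this as the hard part. Second, in the narrow case, ``rerunning the scheme with $\A_k^{\times,+}$'' is not automatic: $\Cl_S^+(T)$ is defined as $T(\A_k)/T(k)U^+_{T,S}$, and to make $k^{\times+}$, $L^{\times+}$, $O_{k,S}^{\times+}$ appear in your diagrams you must trade the $+$ from the unit group to the rational points, $T(\A_k)/T(k)U^+_{T,S}\simeq T(\A_k)/T(k)^+U_{T,S}$, which the paper proves via real approximation for tori (Lemma~\ref{lm:hS+}, using connectedness of $T$ and density of $T(k)$ in $\prod_{v\mid\infty}T(k_v)$); your sketch omits this ingredient. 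Relatedly, your phrasing leaves the direction of the correction factors undetermined: $q(\phi)$ enters \eqref{eq:I.5} in the denominator via the composition identity $q(a)=q(\tilde a)q(\phi)$, while $q(\phi^0)$ enters \eqref{eq:I.6} in the numerator and comes from the separate comparison of degree-zero and full norm cokernels (Lemma~\ref{lem: describe first term E^O(L/k)}), not from a composition --- getting these exponents right is part of the bookkeeping you have not done.
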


When $k$ is a number field and $S=\infty$ is the set of archimedean places, the class number $h_S(T)$ will be denoted by $h(T)$.  

\begin{cor} \label{cor:class_number_form}
    Let $L=\prod_{i=1} K_i$ be an \'etale algebra over a number field and $T_{L/k}$ be the associated 
    multinorm-one $k$-torus. Then 
\begin{equation}\label{eq:I.10}
    h(T_{L/k})=\frac{h(L)}{h(k)}\cdot \frac{[L_{ab}:k]}{\abs{\Sha(L/k)}} \cdot \frac{[ {O}_{k}^{\times}: N( {O}_{L}^{\times})]} {[U_{k}: N(U_{L})]}.
\end{equation}        
\end{cor}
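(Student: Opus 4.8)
The plan is to read off Corollary~\ref{cor:class_number_form} as the specialization of Theorem~\ref{Main:1}(1) to the case in which $k$ is a number field and $S=\infty$ is the set of all archimedean places. Since $k$ is a number field, $S=\infty$ is a nonempty finite set of places containing all the archimedean places, so the hypothesis of part~(1) is met and formula~\eqref{eq:I.4} is available. No new input is needed: the whole statement is an algebraic rearrangement of~\eqref{eq:I.4}, so I expect the only work to be a careful unwinding of the $S=\infty$ notation.

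First I would translate the two sides of~\eqref{eq:I.4} into the notation of the corollary. By the convention fixed just before the corollary, $h_\infty(T)=h(T)$ for every $k$-torus $T$; applying this to $T=T^L$, $T=\Gmk$ and $T=T_{L/k}$ gives $h_\infty(L)=h(L)$, $h_\infty(k)=h(k)$ and $h_\infty(T_{L/k})=h(T_{L/k})$. Hence, by the definition of $E_S(L/k)$ in~\eqref{eq:I.2}, the left-hand side of~\eqref{eq:I.4} becomes
\[
E_\infty(L/k)=\frac{h(L)}{h(k)\, h(T_{L/k})}.
\]
On the right-hand side, for $S=\infty$ the $S$-ring of integers $O_{k,S}=k\cap\A_{k,S}$ is the usual ring of integers $O_k$ (the elements of $k$ integral at every finite place), so $O_{k,\infty}^\times=O_k^\times$ and likewise $O_{L,\infty}^\times=O_L^\times$; similarly $U_{k,\infty}=U_k$ and $U_{L,\infty}=U_L$ under the notation of the corollary. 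The factors $\abs{\Sha(L/k)}$ and $[L_{ab}:k]$ are unchanged.

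Substituting these identifications into~\eqref{eq:I.4} then yields
\[
\frac{h(L)}{h(k)\, h(T_{L/k})}=\frac{\abs{\Sha(L/k)}}{[L_{ab}:k]}\cdot\frac{[U_k:N(U_L)]}{[O_k^\times:N(O_L^\times)]},
\]
and solving for $h(T_{L/k})$ gives exactly~\eqref{eq:I.10}. The hard part, such as it is, lies entirely upstream in Theorem~\ref{Main:1}; at the level of the corollary there is no genuine obstacle beyond bookkeeping. The one point deserving a moment's care is matching the $S$-integral unit groups and idele unit groups with their classical counterparts at $S=\infty$, after which the rearrangement of the (finite) indices is routine.
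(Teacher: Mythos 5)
Your proposal is correct and matches the paper's (implicit) argument exactly: the paper states Corollary~\ref{cor:class_number_form} without separate proof precisely because it is the specialization of Theorem~\ref{Main:1}(1) (i.e.\ formula~\eqref{eq:I.4}) to $S=\infty$, with $h_\infty=h$, $O_{k,\infty}^\times=O_k^\times$, $U_{k,\infty}=U_k$, followed by solving for $h(T_{L/k})$. Your bookkeeping of the $S=\infty$ identifications, including reading $U_k$ as the full idele unit group $\A_{k,\infty}^\times$ with archimedean factors, is the right unwinding.
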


Using Ono's formula on Tamagawa numbers of tori \cite[Main Theorem, p.~68]{ono:tamagawa} (also see \cite[Chapter IV, Corollary 3.3, p.~56]{oesterle}), one observes that $[L_{ab}:k]/\abs{\Sha(L/k)}$ is equal to the Tamagawa number $\tau(T_{L/k})$ of $T_{L/k}$. The formula~\eqref{eq:I.10} can also be written as
\begin{equation}\label{eq:I.11}
    h(T_{L/k})=\frac{h(L)}{h(k)}\cdot \tau(T_{L/k}) \cdot \frac{[ {O}_{k}^{\times}: N( {O}_{L}^{\times})]} {[U_{k}: N(U_{L})]}.
\end{equation}   \


We give the proof of Theorem~\ref{Main:1} in Sections~\ref{sec:S} and \ref{sec:Z}. In Section~\ref{sec:E}, we explore the terms of our formulas in Theorem~\ref{Main:1} and give a few examples. Some of them are classical computational problems, for example, computing the unit group $O_k^\times$ of $O_k$. We also discuss some very recent results on the group $\Sha(L/k)$ and indicate particularly that the term $|\Sha_k(T')|$ in the main theorem of Morishita~\cite[p.~135]{morishita:nagoya1991} is always equal to $1$.

\section{$S$-class numbers of multinorm-one tori}
\label{sec:S}
Let $k$ be a global field and $k_s$ a fixed separable closure of $k$. 
Let $L=\prod_{i=1}^r K_i$ be an \'etale $k$-algebra, 
where each $K_i$ is
a separable field extension of $k$ in $k_s$. Let $\A_k$ and
$\A_L=\prod_{i=1}^r \A_{K_i}$ be the adele rings of $k$ and $L$,
respectively. For each place $v$ of $k$, 
denote by $k_v$ the completion of
$k$ at $v$ and set $L_v := L\otimes_k k_v =\prod_{w|v} L_w$. 
Here a place $w$ of $L$ is a place of
$K_i$ for some $i$ and its completion $L_{w}$ is simply $K_{i,w}$.
If $v$ is finite, let $O_v$ be
the valuation ring of $k_v$ and $O_{L_v}$ the maximal $O_v$-order 
of $L_v$, which is the product of the valuation rings $O_{L_w}$ of
$L_w$ for all $w|v$.
 
Let $S$ be a nonempty finite set of places of
$k$ which contains all archimedean places if $k$ is a number field. 
The $S$-adele rings of $k$ and $L$ are 
\begin{align}\label{eq:Sadele}
    \A_{k,S} \coloneqq \prod_{v\in S} k_v \times \prod_{v\not\in S} O_v, \quad \text{and} \quad 
    \A_{L,S} \coloneqq \prod_{v\in S} L_v \times \prod_{v\not\in S} O_{L_v}.
\end{align}
Let $O_{k,S} \coloneqq k\cap \A_{k,S}$ and $O_{L,S} \coloneqq L\cap \A_{L,S}$ be the
$S$-rings of integers in $k$ and $L$, respectively. 
Denote by $U_{k,S} \coloneqq \A_{k,S}^\times$ (resp.~$U_{L,S} \coloneqq \A_{L,S}^\times$) the unit group of $\A_{k,S}$ (resp.~$\A_{L,S}$). 
Let $N_{L/k}: \A_L^\times \to \A_k^\times$ be the norm map,
and let $\A_L^{(1)}\coloneqq \ker N_{L/k} \subset \A_L^\times $ 
be the norm one subgroup. For
any subgroup $A\subset \A_L^\times$, denote by $A^{(1)}\coloneqq A\cap
\A_L^{(1)}$ its norm one subgroup.


The \emph{$S$-class group} and \emph{$S$-class number} 
of a $k$-torus $T$ are defined as   
\begin{equation}
  \label{eq:Cl_S(T)}
  \Cl_S(T) \coloneqq \frac{T(\A_k)}{T(k) U_{T,S}}, \quad h_S(T) \coloneqq \abs{\Cl_S(T)}, 
\end{equation}
where $U_{T,S}\coloneqq T(\A_{k,S})=\prod_{v\in S} T(k_v) \times
\prod_{v\not\in S} T(O_v)$ is the $S$-unit subgroup of $T(\A_k)$. 

When $k$ is a number field, we let 
\begin{equation}\label{eq:TAk+}
T(\A_k)^+ \subset T(\A_k)    
\end{equation}
denote the subgroup consisting of elements $(x_v)$, such that $x_v$ lies in the neutral component $T(k_v)^0$ of the Lie group $T(k_v)$ for all real places $v$. 
For any subgroup $A \subset T(\A_k)$, define $A^+ \coloneqq A \cap T(\A_k)^+$.
The \emph{narrow $S$-class group} and 
\emph{narrow $S$-class number} 
of a $k$-torus $T$ are defined as   
\begin{equation}
  \label{eq:Cl^+_S(T)}
  \Cl^+_S(T)\coloneqq \frac{T(\A_k)}{T(k) U^+_{T,S}}, \quad h^+_S(T) \coloneqq \abs{\Cl^+_S(T)}.
\end{equation}

Denote by $\A_k^S\coloneqq \prod_{v\notin S}' k_v$ the prime-to-$S$ adele ring of $k$.

\begin{lemma}\label{lm:hS+}
  If $k$ is a number field, we have $\Cl_S^+(T)\simeq T(\A_k)/T(k)^+  U_{T,S}$.
\end{lemma}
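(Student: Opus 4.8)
The plan is to realize both groups as quotients of $T(\A_k)$ by the two subgroups $H_1:=T(k)\,U_{T,S}^+$ and $H_2:=T(k)^+\,U_{T,S}$, and to show that each quotient is canonically isomorphic to the single group $T(\A_k)^+/\bigl(T(k)^+\,U_{T,S}^+\bigr)$. The bookkeeping device is the homomorphism
\[
\psi\colon T(\A_k)\longrightarrow P:=\prod_{v\ \mathrm{real}}\pi_0\bigl(T(k_v)\bigr),\qquad (x_v)_v\longmapsto \bigl(x_v\bmod T(k_v)^0\bigr)_v,
\]
whose kernel is exactly $T(\A_k)^+$ by the very definition of the latter. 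Since complex places contribute connected groups, $P$ records only the real component groups and is finite.

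Two surjectivity statements drive the argument. First, because $S$ contains all archimedean places, $U_{T,S}=\prod_{v\in S}T(k_v)\times\prod_{v\notin S}T(O_v)$ contains the full factor $\prod_{v\ \mathrm{real}}T(k_v)$, so $\psi(U_{T,S})=P$. Second, and this is the one nontrivial input, $\psi(T(k))=P$, i.e.\ the global points surject onto the product of real component groups. I would deduce this from real approximation for the connected group $T$, namely the density of $T(k)$ in $\prod_{v\mid\infty}T(k_v)$: since $P$ is finite and discrete, density forces surjectivity onto $P$.

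Granting these, the modular law (Dedekind's identity) in the abelian group $T(\A_k)$ gives, using $U_{T,S}^+\subset T(\A_k)^+$ and $T(k)^+\subset T(\A_k)^+$,
\[
H_1\cap T(\A_k)^+=\bigl(T(k)\cap T(\A_k)^+\bigr)U_{T,S}^+=T(k)^+U_{T,S}^+=T(k)^+\bigl(U_{T,S}\cap T(\A_k)^+\bigr)=H_2\cap T(\A_k)^+.
\]
Moreover $H_1\cdot T(\A_k)^+=T(k)\,T(\A_k)^+=T(\A_k)$ by $\psi(T(k))=P$, and likewise $H_2\cdot T(\A_k)^+=U_{T,S}\,T(\A_k)^+=T(\A_k)$ by $\psi(U_{T,S})=P$. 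The second isomorphism theorem then yields
\[
\Cl_S^+(T)=T(\A_k)/H_1\ \cong\ T(\A_k)^+/\bigl(H_1\cap T(\A_k)^+\bigr)=T(\A_k)^+/\bigl(H_2\cap T(\A_k)^+\bigr)\ \cong\ T(\A_k)/H_2,
\]
which is the assertion.

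The only genuine obstacle is the surjectivity $\psi(T(k))=P$; everything else is formal manipulation with the modular law and the second isomorphism theorem. If one prefers to avoid invoking real approximation in this generality, the same surjectivity can be verified directly for the specific tori $\Gmk$, $T^L=R_{L/k}\Gm$ and $T_{L/k}$ that actually occur, where it reduces to producing global elements with prescribed signs at the real places.
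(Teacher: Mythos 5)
Your proof is correct and takes essentially the same route as the paper: the crucial input in both is real approximation for the connected torus $T$, which gives $T(\A_k)=T(k)\,T(\A_k)^+$, and both arguments funnel the two quotients through the common intermediate group $T(\A_k)^+/T(k)^+U_{T,S}^+$ (the paper's asserted isomorphism $T(\A_k)^+/T(k)^+U_{T,S}^+\isoto T(\A_k)/T(k)U_{T,S}^+$ is exactly your second-isomorphism-plus-modular-law computation for $H_1$ made explicit). The only cosmetic difference is that for the $H_2$ side the paper identifies both groups with $T(\A_k^S)/T(k)^+U_T^S$ by discarding the $S$-components, whereas you run the same symmetric second-isomorphism argument using $\psi(U_{T,S})=P$; these are formally equivalent.
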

\begin{proof}
Clearly, $T(\A_k)/T(k)^+ U_{T,S}=T(\A_k^S)/T(k)^+ U_T^S$, where $U_T^S=\prod_{v\not\in S} T(O_v)$ is the maximal open subgroup of $T(\A_k^S)$.
Since $T$ is connected, real approximation implies that 
$T(k)\subset T(k_\infty)=\prod_{v|\infty} T(k_v)$ is dense. Thus, $T(\A_k)=T(k) T(\A_k)^+$ and the surjective map 
$T(\A_k)^+ \to T(k) T(\A_k)^+/T(k) U_{T,S}^+$ induces an isomorphism $T(\A_k)^+/T(k)^+ U_{T,S}^+ \isoto T(\A_k)/T(k) U_{T,S}^+$, while the left hand side is $T(\A_k^S)/T(k)^+ U_T^S$. This proves the lemma. \qed
\end{proof}

Let $I_{L, S} = \prod_{i = 1}^r I_{K_i, S}$ (resp.~$P_{L, S} = \prod_{i = 1}^r P_{K_i, S}$, $P_{L, S}^+ = \prod_{i = 1}^r P_{K_i, S}^+$) be the group of fractional ideals (resp. principal ideals, principal ideals generated by totally positive elements) of $L$ which are prime to $S$.
We set \[ \Cl_S(L) \coloneqq \prod_{i = 1}^r \Cl_S(K_i) = \prod_{i = 1}^r I_{K_i, S}/P_{K_i, S} \] 
(resp.~$\Cl_S^+(L) \coloneqq \prod_{i = 1}^r \Cl_S^+(K_i) = \prod_{i = 1}^r I_{K_i, S}/P_{K_i, S}^+$) to be the $S$-ideal class group (resp.~narrow $S$-ideal class group) of $L$.
We denote by 
\[ \text{$h_S(L):=|\Cl_S(L)|\ $ and  $\ h_S^+(L):=|\Cl_S^+(L)|$} \] 
the $S$-class number and narrow $S$-class number of $L$, respectively.
Furthermore, we denote by $I_{L, S}^{(1)}$ the kernel of the norm map $N_{L/k}: I_{L, S} \to I_{k, S}$ and write $P_{L, S}^{(1)} = P_{L, S} \cap I_{L, S}^{(1)}$.

If $T=R_{L/k} \G_{{\rm m}, L}$, then we have
$\Cl_S(L) = \Cl_S(T) = \A_L^\times /L^\times U_{L,S}$ and $\Cl_S^+(L) = \Cl_S^+(T) = \A_L^\times /L^\times U^+_{L,S}$, the $S$-ideal class group of $L$ and
its narrow version,  
and we have $h_S(T)=h_S(L)$ and 
$h_S^+(T)=h_S^+(L)$. 
Recall that $T_{L/k}\coloneqq \ker \left(N_{L/k}:R_{L/k} \G_{{\rm m}, L}\to \Gmk \right)$ 
denotes the multinorm-one torus associated to $L/k$. We have
\begin{align*}
    h_S(T_{L/k})=[\A_L^{(1)}: L^{(1)} U_{L,S}^{(1)}] \quad \text{and}\quad h^+_S(T_{L/k})=[\A_L^{(1)}: L^{(1)} U_{L,S}^{(1)+}]=[\A_L^{(1)}: L^{(1)+} U_{L,S}^{(1)}],
\end{align*}
where $U_{L,S}^{(1)+}=U_{L,S}^{(1)}\cap U_{L,S}^{+}$ and $L^{(1)+}=L^{(1)}\cap \A_{L}^{(1)+}$.
Following Ono, we extend the definition of the invariants in \eqref{eq:I.1} to $L/k$:

\begin{defn}
    Let $L/k$ and $S$ be as above, the \emph{Ono invariant} and its narrow version are defined as
\begin{equation}
  \label{eq:ESLk}
  E_S(L/k) \coloneqq \frac{h_S(L)}{h_S(k) h_S(T_{L/k})} \quad \text{and} 
\quad  E^+_S(L/k)\coloneqq \frac{h^+_S(L)}{h_S^+(k) h^+_S(T_{L/k})},
\end{equation}
where the narrow version is defined only when $k$ is a number field. \end{defn}

\begin{prop} \label{prop: Sasaki 2}
We have
\begin{align}
    h_S(T_{L/k}) &= \frac{[I_{L, S}^{(1)}: P_{L, S}^{(1)}] [ {O}_{k, S}^{\times} \cap N(L^{\times}): N({O}_{L, S}^{\times})]}{[U_{k, S}
    \cap N(\A_{L}^{\times}): N(U_{L, S})]}; \label{eq:hSTLk}\\
    h_S^+(T_{L/k}) &= \frac{[I_{L, S}^{(1)}: P_{L, S}^{(1)+}] [ {O}_{k, S}^{\times +} \cap N(L^{\times +}): N( {O}_{L, S}^{\times +})]}{[U_{k, S}
    \cap N(\A_{L}^{\times}): N(U_{L, S})]}. \label{eq:hS+TLk}
\end{align}
\end{prop}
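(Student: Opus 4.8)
The plan is to translate the adelic formula $h_S(T_{L/k}) = [\A_L^{(1)}:L^{(1)}U_{L,S}^{(1)}]$ into the stated ideal-theoretic indices by means of the idele-to-ideal map, reading off each of the three factors as a kernel or cokernel contribution. First I would introduce the surjective idele-to-ideal homomorphisms $\pi_L:\A_L^\times\to I_{L,S}$, $(x_w)_w\mapsto\prod_{w\notin S}\mathfrak{p}_w^{v_w(x_w)}$, and $\pi_k:\A_k^\times\to I_{k,S}$, with $\ker\pi_L = U_{L,S}$, $\ker\pi_k = U_{k,S}$, and record the compatibility $\pi_k\circ N = N\circ\pi_L$ (with the ideal norm on the right). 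Restricting $\pi_L$ to $\A_L^{(1)}$ gives $\ker(\pi_L|_{\A_L^{(1)}}) = U_{L,S}^{(1)}$ and image $J:=\pi_L(\A_L^{(1)})$ contained in $I_{L,S}^{(1)}$, since $\pi_k(N(x)) = (1)$ for $x\in\A_L^{(1)}$. The induced isomorphism $\A_L^{(1)}/U_{L,S}^{(1)}\isoto J$ carries $L^{(1)}U_{L,S}^{(1)}/U_{L,S}^{(1)}$ onto $\pi_L(L^{(1)})$, so that $h_S(T_{L/k}) = [J:\pi_L(L^{(1)})]$, and the problem reduces to computing this index in $I_{L,S}^{(1)}$.

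The first substantive step is the cokernel of $\pi_L$ on norm-one groups. I would define $\psi:I_{L,S}^{(1)}\to (U_{k,S}\cap N(\A_L^\times))/N(U_{L,S})$ by $\mathfrak{a}\mapsto N(x)\bmod N(U_{L,S})$ for any lift $x\in\A_L^\times$ with $\pi_L(x) = \mathfrak{a}$; here $N(x)$ lands in $U_{k,S}\cap N(\A_L^\times)$ because $\pi_k(N(x)) = N(\mathfrak{a}) = (1)$. Checking that $\psi$ is well defined (independence of lift, as two lifts differ by $\ker\pi_L = U_{L,S}$), surjective, and has kernel exactly $J$ then yields $[I_{L,S}^{(1)}:J] = [U_{k,S}\cap N(\A_L^\times):N(U_{L,S})]$, the denominator of the formula.

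The second step identifies $\pi_L(L^{(1)})$ inside $P_{L,S}^{(1)}$. The subgroup $\pi_L(L^{(1)})$ consists of the principal ideals $(\alpha)$ with $N(\alpha)=1$, whereas $P_{L,S}^{(1)}$ consists of those with merely $N(\alpha)\in O_{k,S}^\times$; the map $P_{L,S}^{(1)}\to(O_{k,S}^\times\cap N(L^\times))/N(O_{L,S}^\times)$, $(\alpha)\mapsto N(\alpha)$, has kernel $\pi_L(L^{(1)})$ and is surjective, so $[P_{L,S}^{(1)}:\pi_L(L^{(1)})] = [O_{k,S}^\times\cap N(L^\times):N(O_{L,S}^\times)]$. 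Assembling these through multiplicativity of indices along $\pi_L(L^{(1)})\subseteq P_{L,S}^{(1)}\subseteq I_{L,S}^{(1)}$ and $\pi_L(L^{(1)})\subseteq J\subseteq I_{L,S}^{(1)}$ (all indices finite since $S\neq\emptyset$) gives
\[
h_S(T_{L/k}) = [J:\pi_L(L^{(1)})] = \frac{[I_{L,S}^{(1)}:P_{L,S}^{(1)}]\,[O_{k,S}^\times\cap N(L^\times):N(O_{L,S}^\times)]}{[U_{k,S}\cap N(\A_L^\times):N(U_{L,S})]},
\]
which is \eqref{eq:hSTLk}. For \eqref{eq:hS+TLk} I would rerun the argument starting from $h_S^+(T_{L/k}) = [\A_L^{(1)}:L^{(1)+}U_{L,S}^{(1)}]$, replacing $L^{(1)}$ by $L^{(1)+}$ throughout; since the norm sends totally positive elements to totally positive ones, the same two maps now land $\pi_L(L^{(1)+})$ inside $P_{L,S}^{(1)+}$ with cokernel $(O_{k,S}^{\times+}\cap N(L^{\times+}))/N(O_{L,S}^{\times+})$, while the denominator $[U_{k,S}\cap N(\A_L^\times):N(U_{L,S})]$ is unchanged.

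The main obstacle I anticipate is the careful bookkeeping of the three distinct ``norm-one'' conditions that appear — $N=1$ on ideles, $N=1$ on field elements, and $N\in O_{k,S}^\times$ (trivial ideal) on ideals — and in particular verifying that $\psi$ is surjective with kernel precisely $J$. This cokernel is exactly where the failure of a norm-one ideal to lift to a norm-one idele is measured, and it is the source of the denominator of the formula, so its exact computation is the crux of the argument.
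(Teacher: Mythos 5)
Your proposal is correct and follows essentially the same route as the paper: your maps $\psi$ and $(\alpha)\mapsto N(\alpha)$ are exactly the two snake-lemma connecting homomorphisms $\delta_1,\delta_2$ that the paper extracts from the rows $U_{L,S}\to\A_L^{\times}\to I_{L,S}$ and $O_{L,S}^{\times}\to L^{\times}\to P_{L,S}$ (with images $\bigl(U_{k,S}\cap N(\A_L^{\times})\bigr)/N(U_{L,S})$ and $\bigl(O_{k,S}^{\times}\cap N(L^{\times})\bigr)/N(O_{L,S}^{\times})$), merely written out by hand. Your double-tower index identity $[I_{L,S}^{(1)}:J]\,[J:\pi_L(L^{(1)})]=[I_{L,S}^{(1)}:P_{L,S}^{(1)}]\,[P_{L,S}^{(1)}:\pi_L(L^{(1)})]$, with $J=\pi_L(\A_L^{(1)})\simeq\A_L^{(1)}/U_{L,S}^{(1)}$, is precisely the paper's computation $q(f)=q(f')\,q(f'')$ on its final commutative diagram, and your narrow variant matches the paper's treatment verbatim.
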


\begin{proof}
We prove \eqref{eq:hS+TLk}; the proof of \eqref{eq:hSTLk} is the same where one deletes ``+'' from the terms.  
Consider the following two commutative diagrams
\begin{center}
\begin{tikzcd} 
1 \arrow[r] & U_{L,S} \arrow[r] \arrow[d, "N"] & \A_{L}^\times \arrow[r] \arrow[d, "N"] &  I_{L,S} \arrow[r] \arrow[d, "N"] & 1 \\ 
1 \arrow[r] & U_{k, S} \arrow[r] & \A_{k}^{\times} \arrow[r] & I_{k,S} \arrow[r] & 1; 
\end{tikzcd}
\end{center}
\begin{center}
\begin{tikzcd} 
1 \arrow[r] & O_{L,S}^{\times +} \arrow[r] \arrow[d, "N"] & {L}^{\times +} \arrow[r] \arrow[d, "N"] &  P_{L,S}^+ \arrow[r] \arrow[d, "N"] & 1 \\ 
1 \arrow[r] & O_{k, S}^{\times +} \arrow[r] & {k}^{\times +} \arrow[r] & P_{k,S}^+ \arrow[r] & 1. 
\end{tikzcd}
\end{center}

The snake lemma gives the following two exact sequences
\begin{center}
\begin{tikzcd} 
1 \arrow[r] & U_{L,S}^{(1)} \arrow[r]  & \A_{L}^{(1)} \arrow[r]  &  I_{L,S}^{(1)}  \arrow[r, "\delta_1"]  & \dfrac{U_{k,S}}{N(U_{L,S})} \arrow[r] & \dfrac{\A_{k}^\times}{N(\A_{L}^\times)} \arrow[r] & \dotsb
\end{tikzcd}
\end{center}
\begin{center}
\begin{tikzcd}
1 \arrow[r] & O_{L,S}^{(1)+} \arrow[r]  & {L}^{(1)+} \arrow[r]  &  P_{L,S}^{(1)+} \arrow[r, "\delta_2"]  
& \dfrac{O_{k,S}^{\times +}} {N(O_{L,S}^{\times +})} 
\arrow[r] & \dfrac{k^{\times +}} {N({L}^{\times +})} \arrow[r] & \dotsb
\end{tikzcd}
\end{center}
Clearly, ${\rm Im}\, \delta_1=\left(U_{k,S} \cap N(\A_{L}^\times)\right) / N(U_{L,S})$ and ${\rm Im}\, \delta_2=\left(O_{k,S}^{\times +} \cap N({L}^{\times +})\right) /N(O_{L,S}^{\times +})$ and these two abelian groups are finite. Thus, we have a commutative diagram
\begin{center}
\begin{tikzcd} 
1 \arrow[r] 
& \A_{L}^{(1)}/U_{L,S}^{(1)}  \arrow[r]  
& I_{L,S}^{(1)}  \arrow[r] 
&  \left(U_{k,S} \cap N(\A_{L}^\times)\right) / N(U_{L,S}) \arrow[r]    
& 1 \\ 
1 \arrow[r] 
& {L}^{(1)+}/O_{L,S}^{(1)+} \arrow[r] \arrow[u, hook, "f'"]
& P_{L,S}^{(1)+} \arrow[r] \arrow[u, hook, "f"]
& \left(O_{k,S}^{\times +} \cap N({L}^{\times +})\right) /N(O_{L,S}^{\times +}) \arrow[r] \arrow[u, hook,  "f''"]
& 1. 
\end{tikzcd}
\end{center}
Therefore, 
\[ \begin{split}
    [  I_{L,S}^{(1)} :  P_{L,S}^{(1)+}]&=q(f)=q(f') q(f'') \\
    &=[\A_{L}^{(1)}: {L}^{(1)+} U_{L,S}^{(1)}] \frac{[U_{k,S} \cap N(\A_{L}^\times): N(U_{L,S})]}{[O_{k,S}^{\times +} \cap N({L}^{\times +}):N(O_{L,S}^{\times +})].}
\end{split}\]
This proves the proposition. \qed 
\end{proof}

For any $k$-torus $T$, we denote by $\widehat{T}$ its \emph{character group} $\Hom_{\overline{k}}(T, \Gmk)$. Also, for any finite commutative group $G$, let $G^\vee:=\Hom(G, \Q/\Z)$ denote the Pontryagin dual. 

\begin{thm} \label{thm: L_ab over k}
There are canonical isomorphisms
\begin{align}\label{eq:2.6}
    \frac{\A_k^{\times}}{k^{\times} \cdot N(\A_L^{\times})} \simeq H^1(\widehat{T}_{L/k})^{\vee} \simeq \Gal(L_{\rm ab}/k),
\end{align}
where $L_{\rm ab}$ is the maximal abelian extension of $k$ that is contained in all $K_i$.
\end{thm}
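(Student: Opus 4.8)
The plan is to show that all three groups are canonically isomorphic to $\Gal(L_{\rm ab}/k)$, deriving the two displayed isomorphisms as the resulting composites. Set $\Gamma=\Gal(k_s/k)$, $\Gamma_i=\Gal(k_s/K_i)$, and let $C_k=\A_k^\times/k^\times$ and $C_L=\A_L^\times/L^\times=\prod_i C_{K_i}$ be the idele class groups, so that the leftmost group is $C_k/N(C_L)$, where $N(C_L)=\prod_i N_{K_i/k}(C_{K_i})$ is the subgroup of $C_k$ generated by the individual norm images.

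First I would identify $C_k/N(C_L)$ with $\Gal(L_{\rm ab}/k)$ via global class field theory. The norm limitation theorem gives $N_{K_i/k}(C_{K_i})=N_{\widetilde K_i/k}(C_{\widetilde K_i})$, where $\widetilde K_i=k^{\rm ab}\cap K_i$ is the maximal subextension of $K_i$ abelian over $k$. Since the idele-class correspondence sends a product of norm groups to the intersection of the corresponding fields, the group generated by the $N_{\widetilde K_i/k}(C_{\widetilde K_i})$ is the norm group of $\bigcap_i\widetilde K_i=k^{\rm ab}\cap\bigcap_i K_i=L_{\rm ab}$. Hence $N(C_L)=N_{L_{\rm ab}/k}(C_{L_{\rm ab}})$, and Artin reciprocity yields the canonical isomorphism $\A_k^\times/(k^\times N(\A_L^\times))=C_k/N(C_L)\isoto\Gal(L_{\rm ab}/k)$.

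Next I would compute $H^1(\widehat{T}_{L/k})$. Dualizing the exact sequence of tori $1\to T_{L/k}\to R_{L/k}\G_{{\rm m},L}\xrightarrow{N}\Gmk\to 1$ gives a short exact sequence of $\Gamma$-lattices $0\to\Z\to\Z[X]\to\widehat{T}_{L/k}\to 0$, where $X=\coprod_i\Hom_k(K_i,k_s)$ and $\Z[X]=\bigoplus_i\Z[\Gamma/\Gamma_i]$ is the (permutation) character module of $R_{L/k}\G_{{\rm m},L}$. Using $H^1(\Gamma,\Z)=0$, Shapiro's lemma $H^\bullet(\Gamma,\Z[X])=\bigoplus_i H^\bullet(\Gamma_i,\Z)$, the vanishing $H^1(\Gamma_i,\Z)=0$, and $H^2(\Gamma,\Z)\cong\Char(\Gamma)$, the long exact sequence collapses to $H^1(\widehat{T}_{L/k})\cong\ker\big(\Char(\Gamma)\xrightarrow{\,\mathrm{res}\,}\bigoplus_i\Char(\Gamma_i)\big)$, the map being restriction under Shapiro. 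A character of $\Gamma$ vanishes on every $\Gamma_i$ exactly when it factors through $\Gal(L_{\rm ab}/k)$, so $H^1(\widehat{T}_{L/k})\cong\Char(\Gal(L_{\rm ab}/k))=\Gal(L_{\rm ab}/k)^\vee$ and thus $H^1(\widehat{T}_{L/k})^\vee\simeq\Gal(L_{\rm ab}/k)$. To make the middle isomorphism canonical I would feed reciprocity into this last identification: under $\Char(\Gamma)=\Char(\Gal(k^{\rm ab}/k))\hookrightarrow\Hom(C_k,\Q/\Z)$ the subgroup $\Char(\Gal(L_{\rm ab}/k))$ is exactly the group of characters of $C_k$ killed by $N_{L_{\rm ab}/k}(C_{L_{\rm ab}})=N(C_L)$, i.e. $\big(C_k/N(C_L)\big)^\vee$; dualizing the finite groups recovers $\A_k^\times/(k^\times N(\A_L^\times))\simeq H^1(\widehat{T}_{L/k})^\vee$ compatibly with both identifications with $\Gal(L_{\rm ab}/k)$.

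The hard part will be naturality rather than any single estimate: one must check that the connecting map $H^2(\Gamma,\Z)\to H^2(\Gamma,\Z[X])$ is genuinely restriction under Shapiro, and that the reciprocity pairing used to identify $\Char(\Gamma)$ with characters of $C_k$ is the one implicit in the duality between $T_{L/k}$ and $\widehat{T}_{L/k}$, so that the two routes to $\Gal(L_{\rm ab}/k)$ agree. As a consistency check one has $|C_k/N(C_L)|=[L_{\rm ab}:k]=|H^1(\widehat{T}_{L/k})|$; were one instead to argue through Poitou--Tate duality, this equality would amount to $\Sha^1(k,\widehat{T}_{L/k})=0$, i.e. to the statement that no nontrivial abelian subextension of the $K_i$ splits completely at all places of $k$.
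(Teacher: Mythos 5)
Your proposal is correct, and it reaches the result by a genuinely different mechanism than the paper for the left-hand isomorphism. The paper works with $C_K(T)=T(\A_K)/T(K)$ for a finite Galois splitting field $K/k$ with group $G$: the snake lemma applied to $1\to T_{L/k}\to T^L\to\Gmk\to 1$ together with Tate's vanishing $H^1(G,C_K)=0$ gives $\coker N\isoto H^1(G,C_K(T_{L/k}))$, and this is then identified with $H^1(G,\widehat{T}_{L/k})^\vee$ by Nakayama (Tate--Nakayama) duality; only the final step --- computing $H^1(G,\widehat{T}_{L/k})\simeq \Gal(L_{\rm ab}/k)^\vee$ from the dual sequence $0\to\Z\to\widehat{T^L}\to\widehat{T}_{L/k}\to 0$ via Shapiro's lemma --- coincides with your second paragraph. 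You instead obtain $\A_k^\times/k^\times N(\A_L^\times)\simeq\Gal(L_{\rm ab}/k)$ directly from the norm limitation theorem and the fact that a product of norm groups corresponds to the intersection of the class fields; this is precisely the argument the paper itself records in the remark immediately following the theorem, as a direct proof of the composite isomorphism, and you then recover the middle isomorphism by Pontryagin duality of finite groups. What each route buys: Tate--Nakayama hands the paper a canonical middle isomorphism with no compatibility to verify, while your route is lighter on machinery (no $C_K(T)$ formalism, no duality theorem for class formations) but obliges you to carry out the two checks you flag --- that under Shapiro the map $H^2(\Gamma,\Z)\to\bigoplus_i H^2(\Gamma_i,\Z)$ is restriction (note this is an induced map in the long exact sequence, not a connecting map; the paper asserts the same identification without proof), and that the reciprocity pairing is compatible with $H^2(\Gamma,\Z)\simeq\Char(\Gamma)$; both are standard. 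Your closing consistency check is also sound: $\Sha^1(k,\widehat{T}_{L/k})=0$ holds by Chebotarev, since a locally trivial character of $\Gal(L_{\rm ab}/k)$ would cut out a cyclic subextension in which every place of $k$ splits completely.
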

\begin{proof}
Let $K$ be a finite Galois extension of $k$ containing all $K_i$ and $G = \gal(K/k)$. Denoting by $C_K$ the idele class group of $K$, one has the exact sequence
\begin{center}
    \begin{tikzcd}
    1 \arrow[r] & K^{\times} \arrow[r] & \A_K^{\times} \arrow[r] & C_K \arrow[r] & 1.
    \end{tikzcd}
\end{center}

Let $T$ be a $k$-torus splitting over $K$. Following Ono in \cite{ono:tamagawa}, we apply $\Hom(\widehat{T}, \cdot)$ to the above exact sequence with the canonical identifications,
\begin{align*}
    T(K) = \Hom(\widehat{T}, K^{\times}), \quad T(\A_K) = \Hom(\widehat{T}, \A_K^{\times})
\end{align*}
and define
\begin{align*}
    C_K(T) \coloneqq T(\A_K)/T(K) \simeq \Hom(\widehat{T}, C_K).
\end{align*}
The short exact sequence
\begin{align*}
    1 \longrightarrow T(K) \longrightarrow T(\A_K) \longrightarrow C_K(T) \longrightarrow 1
\end{align*}
induces the long exact sequence
\begin{align*}
    1 \to T(k) \to T(\A_k) \to C_K(T)^G \to H^1(T(K)) \to H^1(T(\A_K)) \to H^1(C_K(T)) \to \dotsb
\end{align*}
We claim that for $T = R_{L/k} \mathbb{G}_{m, L}$ or $T = \mathbb{G}_{m, k}$, we have $C_K(T)^G = T(\A_K)/T(k)$. If $T = \mathbb{G}_{m, k}$, this assertion holds since $H^1(G, T(K)) = H^1(k, \mathbb{G}_{m, k}) = 0$ by Hilbert's Theorem 90. If $T = R_{L/k} \mathbb{G}_{m, L}$, for each $K_i$, by Shapiro's lemma
we have 
\begin{align*}
    H^1(k, R_{K_i/k} \mathbb{G}_{m, K_i}) = H^1(K_i, \mathbb{G}_{m, K_i}) = 0
\end{align*}
and hence $H^1(G, T(K)) = H^1\left(k, \prod_{i = 1}^r R_{K_i/K} \mathbb{G}_{m, K_i}\right) = \prod_{i = 1}^r H^1(k, R_{K_i/K} \mathbb{G}_{m, K_i}) = 0$.

Now suppose we have the following exact sequence of $k$-tori splitting over $K$:
\begin{equation}\label{eq:2.9}
    \begin{tikzcd}
    1 \arrow[r] & T' \arrow[r, "\iota"] & T \arrow[r, "N"] & T'' \arrow[r] & 1.
    \end{tikzcd}
\end{equation}

Putting in $K \xhookrightarrow{} \A_K$, we obtain the commutative diagram
\begin{center}
    \begin{tikzcd}
    1 \arrow[r] & T'(K) \arrow[r, "\iota"] \arrow[d, hook] & T(K) \arrow[r, "N"] \arrow[d, hook] & T''(K) \arrow[r] \arrow[d, hook] & 1 \\
    1 \arrow[r] & T'(\A_K) \arrow[r, "\iota"] & T(\A_K) \arrow[r, "N"] & T''(\A_K) \arrow[r] & 1.
    \end{tikzcd}
\end{center}
By the snake lemma, we have the short exact sequence
\begin{center}
    \begin{tikzcd}
    1 \arrow[r] & C_K(T') \arrow[r, "\iota"] & C_K(T) \arrow[r, "N"] & C_K(T'') \arrow[r] & 1,
    \end{tikzcd}
\end{center}
which induces the long exact sequence
\begin{center}
    \begin{tikzcd}
    \dotsb \arrow[r] & C_K(T)^G \arrow[r, "N"] & C_K(T'')^G \arrow[r] & H^1(G, C_K(T')) \arrow[r, "\iota"] & H^1(G, C_K(T)) \arrow[r] & \dotsb
    \end{tikzcd}
\end{center}
and the long exact sequence
\begin{equation}\label{eq:2.10}
   \begin{tikzcd}
    1 \arrow[r] & \coker N \arrow[r] & H^1(G, C_K(T')) \arrow[r, "\iota"] & H^1(G, C_K(T)) \arrow[r] & \dotsb.
    \end{tikzcd} 
\end{equation}
Taking \eqref{eq:2.9} to be 
\begin{equation}
    \label{eq:2.11}
      \begin{tikzcd}
    1 \arrow[r] &  T_{L/k}  \arrow[r, "\iota"] & R_{L/k} \mathbb{G}_{m, L}  \arrow[r, "N"] & \mathbb{G}_{m, k} \arrow[r] & 1,
    \end{tikzcd}
\end{equation}
we find that
\begin{align*}
    \coker N = \frac{\A_k^{\times}}{k^{\times} \cdot N(\A_L^{\times})}.
\end{align*}
We have 
\begin{align*}
    H^1(G, C_K(R_{L/k} \mathbb{G}_{m, L}))
    = \prod\limits_{i = 1}^r H^1(\gal(K/k), C_K(R_{K_i/k} \mathbb{G}_{m, K_i}))
\end{align*}
where for each $i$,
\begin{align*}
    C_K(R_{K_i/k} \mathbb{G}_{m, K_i}) 
    = \frac{(K_i \otimes \A_K)^{\times}}{(K_i \otimes K)^{\times}} 
    = (\A_K^{\times}/K^{\times})^{[K:K_i]}.
\end{align*}
By Theorem 9.1 in \cite{tate1967global}, we have $H^1(\gal{(K/k)}, \A_K^{\times}/K^{\times}) = 0$, so $H^1(G, C_K(R_{L/k} \mathbb{G}_{m, L})) = 0$. Thus,
\begin{align*}
    \coker N \isoto H^1(G, C_K(T_{L/k})) \simeq H^1(G, \widehat{T}_{L/k})^{\vee},
\end{align*}
where the second isomorphism is Nakayama's duality \cite[Theorem 6.3]{platonov-rapinchuk}.

Setting $T^L = R_{L/k} \mathbb{G}_{m, L}$ and taking duals from the exact sequence of $k$-tori \eqref{eq:2.11} , we obtain
\begin{center}
    \begin{tikzcd}
    1 \arrow[r] & \Z \arrow[r, "\widehat{N}"] & \widehat{T^L} \arrow[r, "\widehat{\iota}"] & \widehat{T}_{L/k} \arrow[r] & 1.
    \end{tikzcd}
\end{center}
Denoting the Galois group $\gal(K_i/k)$ by $H_i$, we obtain
\begin{align*}
    H^1(G, \widehat{T^L})
    = \bigoplus_{i = 1}^r H^1\left(G, \Ind_{H_i}^G \Z\right)
    = \bigoplus_{i = 1}^r H^1(H_i, \Z) = 0.
\end{align*}
We have the following long exact sequence
\begin{center}
    \begin{tikzcd}
    1 \arrow[r] & H^1(G, \widehat{T}_{L/k}) \arrow[r] & H^2(G, \Z) \arrow[r, "\widehat{N}"] \arrow[d, "\wr"] & \bigoplus\limits_{i = 1}^r H^2(H_i, \Z) \arrow[r] \arrow[d, "\wr"] & \dotsb \\
    & & \Hom(G, \Q/\Z) \arrow[r, "r"] & \bigoplus\limits_{i = 1}^r \Hom(H_i, \Q/\Z)
    \end{tikzcd}
\end{center}
where the map $r$ is  $f \mapsto (f|_{H_i})_i$. We describe $\ker r$ explicitly:
\begin{align*}
    H^1(G, \widehat{T}_{L/k}) = \ker r 
    &= \{f: G \to \Q/\Z \mid f|_{H_i} = 0 \text{ for all $i$ }\} \\
    &= \{f: G \to \Q/\Z \mid f|_{[G, G] H_1 \dotsb H_r} = 0 \} \\
    &\simeq \{f: G/([G, G]H_1 \dotsb H_r) \to \Q/\Z\}.
\end{align*}
The fixed field of $[G, G]H_1 \dotsb H_r$ is exactly $L_{\rm ab}$, the maximal abelian extension of $k$ contained in all $K_i$. Thus
\begin{align*}
    H^1(G, \widehat{T}_{L/k}) \simeq \Hom(\Gal(L_{\rm ab}/k), \Q/\Z) = \gal(L_{\rm ab}, k)^{\vee}.
\end{align*}
Altogether,
\begin{align*}
    \frac{\A_k^{\times}}{k^{\times} \cdot N(\A_L^{\times})} \simeq H^1(\widehat{T}_{L/k})^{\vee} \simeq \Gal(L_{\rm ab}/k),
\end{align*}
where each isomorphism is canonical. \qed
\end{proof}

\begin{rem}
    We can show the consequence $\A_k^\times/k^\times \cdot N(\A_L^\times)\simeq \Gal(L_{\rm ab}/k)$ of Theorem~\ref{thm: L_ab over k} directly. Indeed, by class field theory, the maximal abelian subextension $K_{i, \rm ab}$ of $K_i/k$ corresponds to the subgroup $k^\times \cdot N_{K_i/k}(\A_{K_i}^\times)$ of finite index. Thus, their intersection $\cap K_{i,\rm ab}=L_{\rm ab}$ corresponds to the subgroup $k^\times \cdot N(\A_{L}^\times)$ of finite index.
\end{rem}

\begin{thm}\label{thm:ESLk} 
Let the notation be as above.
We have 
  \begin{equation}
    \label{eq:ESLkformula}
   E_S(L/k) 
   = \frac{\abs{\Sha(L/k)}}{[L_{ab}:k]} 
   \cdot \frac{[U_{k, S}: N(U_{L, S})]}{[ {O}_{k, S}^{\times}: N( {O}_{L, S}^{\times})]}
  \end{equation}
and 
  \begin{equation}
    \label{eq:ES+Lkformula}
   E_S^+(L/k) 
   = \frac{\abs{\Sha(L/k)}}{[L_{ab}:k] \cdot q(\phi)} 
   \cdot \frac{[U_{k, S}: N(U_{L, S})]}{[ {O}_{k, S}^{\times +}: N( {O}_{L, S}^{\times +})]},
  \end{equation}
where $L_{ab}$ is the maximal abelian extension of $k$ contained in all $K_i$ and $\phi: k^{\times +}/N (L^{\times +}) \to k^{\times} / N (L^{\times})$ is the canonical homomorphism.
\end{thm}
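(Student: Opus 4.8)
The plan is to reduce the whole statement to a comparison of $q$-symbols attached to a single commutative diagram, assembled from the two snake-lemma sequences already produced in the proof of Proposition~\ref{prop: Sasaki 2}. Applying the norm to the two vertical short exact sequences $1\to U_{L,S}\to\A_L^\times\to I_{L,S}\to1$ and $1\to O_{L,S}^\times\to L^\times\to P_{L,S}\to1$ yields, via the snake lemma, two six-term exact sequences whose tails are the four-term exact rows
\[1\to\tfrac{O_{k,S}^\times\cap N(L^\times)}{N(O_{L,S}^\times)}\to\tfrac{O_{k,S}^\times}{N(O_{L,S}^\times)}\to\tfrac{k^\times}{N(L^\times)}\to\tfrac{P_{k,S}}{N(P_{L,S})}\to1\]
and
\[1\to\tfrac{U_{k,S}\cap N(\A_L^\times)}{N(U_{L,S})}\to\tfrac{U_{k,S}}{N(U_{L,S})}\to\tfrac{\A_k^\times}{N(\A_L^\times)}\to\tfrac{I_{k,S}}{N(I_{L,S})}\to1.\]
The inclusions $O_{k,S}^\times\hookrightarrow U_{k,S}$, $k^\times\hookrightarrow\A_k^\times$, $P_{k,S}\hookrightarrow I_{k,S}$ (and compatibly on the norm subgroups) make the top row map to the bottom row; I will call the four vertical maps $a,b,c,d$, read left to right.

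First I would read off the relevant $q$-symbols. The middle map $c\colon k^\times/N(L^\times)\to\A_k^\times/N(\A_L^\times)$ has kernel $\Sha(L/k)$ by definition and cokernel $\A_k^\times/k^\times N(\A_L^\times)\cong\Gal(L_{ab}/k)$ by Theorem~\ref{thm: L_ab over k}, so $q(c)=[L_{ab}:k]/\abs{\Sha(L/k)}$. Since the source and target of $b$ are finite, $q(b)=[U_{k,S}:N(U_{L,S})]/[O_{k,S}^\times:N(O_{L,S}^\times)]$, and likewise $q(a)=[U_{k,S}\cap N(\A_L^\times):N(U_{L,S})]/[O_{k,S}^\times\cap N(L^\times):N(O_{L,S}^\times)]$. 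I then rewrite $E_S(L/k)$ in terms of $q(a)$ and $q(d)$: Proposition~\ref{prop: Sasaki 2} gives $h_S(T_{L/k})=[I_{L,S}^{(1)}:P_{L,S}^{(1)}]/q(a)$; the snake lemma for the norm map on ideals gives $h_S(L)=[I_{L,S}^{(1)}:P_{L,S}^{(1)}]\cdot[N(I_{L,S}):N(P_{L,S})]$; and a short index computation identifies $[N(I_{L,S}):N(P_{L,S})]/h_S(k)=1/q(d)$. Combining these yields $E_S(L/k)=q(a)/q(d)$.

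The crux is the multiplicativity of the $q$-symbol along the morphism of four-term exact sequences: splitting each row at the image of its middle arrow into two short exact sequences and applying the snake-lemma relation $q(\text{middle})=q(\text{sub})\cdot q(\text{quotient})$ twice gives $q(a)\,q(c)=q(b)\,q(d)$. Together with $E_S(L/k)=q(a)/q(d)$ this collapses to $E_S(L/k)=q(b)/q(c)=\frac{\abs{\Sha(L/k)}}{[L_{ab}:k]}\cdot\frac{[U_{k,S}:N(U_{L,S})]}{[O_{k,S}^\times:N(O_{L,S}^\times)]}$, which is \eqref{eq:ESLkformula}. For the narrow formula \eqref{eq:ES+Lkformula} I would run the identical argument using the totally positive principal diagram as the top row; the only change is that the middle vertical map becomes $c^+=c\circ\phi$, so $q(c^+)=q(c)\,q(\phi)$ and the extra factor $q(\phi)$ lands in the denominator exactly as stated.

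I expect the main obstacle to be the finiteness bookkeeping that licenses these $q$-symbol manipulations. The middle terms $k^\times/N(L^\times)$ and $\A_k^\times/N(\A_L^\times)$, as well as the image groups used in the splitting, are infinite, so before writing $q(a)q(c)=q(b)q(d)$ I must verify that each of $a,b,c,d$ and the induced map on the middle images has finite kernel and cokernel. The finiteness of $\coker c\cong\Gal(L_{ab}/k)$ and of $\ker c=\Sha(L/k)$ comes from Theorem~\ref{thm: L_ab over k}, and the finiteness of the local norm indices $[U_{k,S}:N(U_{L,S})]$ and $[O_{k,S}^\times:N(O_{L,S}^\times)]$ reduces to the triviality of the unramified local norm maps at almost all places; folding these facts into the finiteness of $\ker d$ and $\coker d$ is the one genuinely delicate point.
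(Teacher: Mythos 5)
Your proposal is correct and takes essentially the same route as the paper: the identity $q(a)\,q(c)=q(b)\,q(d)$ that you extract from the morphism of four-term snake-lemma tails is precisely the paper's chain $q(\tilde a)=q(a')\,q(a'')$ with $q(a')=q(b)/q(b')$ and $q(a'')=q(c)$ in its notation, and both arguments then feed in Proposition~\ref{prop: Sasaki 2}, Theorem~\ref{thm: L_ab over k}, the identification $\ker\bigl(k^\times/N(L^\times)\to\A_k^\times/N(\A_L^\times)\bigr)=\Sha(L/k)$, and $q(c\circ\phi)=q(c)\,q(\phi)$ for the narrow case. The only difference is packaging (a single comparison of four-term exact sequences with double splitting, versus the paper's two successive short-exact-sequence diagrams), and the finiteness bookkeeping you flag as delicate is lighter than you fear: the middle images at which you split are quotients of the finite groups $O_{k,S}^{\times}/N(O_{L,S}^{\times})$ and $U_{k,S}/N(U_{L,S})$, while $\coker d$ is a quotient of $\Cl_S(k)$ and $\ker d$ is finite because $N(I_{L,S})/N(P_{L,S})$ is a quotient of $\Cl_S(L)$.
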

\begin{proof}
We shall prove the formula \eqref{eq:ES+Lkformula}. The proof of \eqref{eq:ESLkformula} is the same where one deletes ``+'' from the terms. Let $\Tilde{a}: k^{\times}/N(L^{\times}) \rightarrow
\A_{k}^{\times}/N(\A_{L}^{\times})$ be the canonical
homomorphism and put $a \coloneqq  \Tilde{a} \circ \phi$. Consider the commutative diagram whose rows are exact: 
\begin{center}
\begin{tikzcd} 
1 \arrow[r] &  {O}_{k, S}^{\times +}/ {O}_{k, S}^{\times +} \cap N(L^{\times +}_S) \arrow[r] \arrow[d, "a'"] & k^{\times +}/N(L^{\times +}) \arrow[r] \arrow[d, "a"] & k^{\times +}/ {O}_{k, S}^{\times +} N(L^{\times +}) \arrow[r] \arrow[d, "a''"] & 1 \\ 
1 \arrow[r] & U_{k, S}/U_{k, S} \cap N(\A_{L}^{\times}) \arrow[r] & \A_{k}^{\times}/N(\A_{L}^{\times}) \arrow[r] & \A_{k}^{\times}/U_{k, S} N(\A_{L}^{\times}) \arrow[r] & 1. 
\end{tikzcd}
\end{center}

From the commutative diagram
\begin{center}
    \begin{tikzcd}
    1 \arrow[r] &  {O}_{k, S}^{\times +} \cap N(L^{\times +})/N( {O}_{L, S}^{\times +}) \arrow[r] \arrow[d, "b'"] &  {O}_{k, S}^{\times +}/N( {O}_{L, S}^{\times +}) \arrow[r] \arrow[d, "b"] &  {O}_{k, S}^{\times +}/ {O}_{k, S}^{\times +} \cap N(L^{\times +}) \arrow[r] \arrow[d, "a'"] & 1 \\ 
    1 \arrow[r] & U_{k, S} \cap N(\A_{L}^{\times})/N(U_{L, S}) \arrow[r] & U_{k, S}/N(U_{L, S}) \arrow[r] & U_{k, S}/U_{k, S} \cap N(\A_{L}^{\times}) \arrow[r] & 1 
    \end{tikzcd}
\end{center}
we have
\begin{align*}
    q(a') 
    = \frac{q(b)}{q(b')} 
    = \frac{[U_{k, S}: N(U_{L, S})]}{[ {O}_{k, S}^{\times +}:
    N( {O}_{L, S}^{\times +})]} 
    \cdot
    \frac{[ {O}_{k, S}^{\times +} \cap N(L^{\times +}):
    N( {O}_{L, S}^{\times +})]}{[U_{k, S} \cap N(\A_{L}^{\times}):N(U_{L, S})]}.
\end{align*}
Now, the diagram
\begin{center}
    \begin{tikzcd}
    1 \arrow[r] & P_{L, S}^+ \arrow[r] \arrow[d, "N'"] & I_{L, S} \arrow[r] \arrow[d, "N"] & \Cl_{L, S}^+ \arrow[r] \arrow[d, "N''"] & 1 \\ 
    1 \arrow[r] & P_{k, S}^+ \arrow[r] & I_{k, S} \arrow[r] & \Cl_{k, S}^+ \arrow[r] & 1
    \end{tikzcd}
\end{center}
induces the exact sequence 
\begin{center}
    \begin{tikzcd}
    I_{L, S}^{(1)} \arrow[r] & \ker N'' \arrow[r, "\delta"] & P_{k, S}^+/N'(P_{L, S}^+) \arrow[r, "c"] & I_{k, S}/N(I_{L, S}) \arrow[r, twoheadrightarrow] & \coker N'' \arrow[r] & 1 \\ 
    & & k^{\times +}/ {O}_{k, S}^{\times +} N(L^{\times +}) \arrow[r, "a''"] \arrow[u, "\wr"] & \A_{k}^{\times}/U_{L, S} N(\A_{L}^{\times}). \arrow[u, "\wr"] & 
    \end{tikzcd}
\end{center}
Then
\begin{align*}
\begin{split}
    q(a'') 
    & = q(c) 
    = \frac{\abs{\coker N''}}{\abs{\ker c}}
    = \abs{\coker N''} \cdot \frac{[I_{L, S}^{(1)}: P_{L, S}^{(1)+}]}{\abs{\ker N''}} \\
    & = q(N'') [I_{L, S}^{(1)}: P_{L, S}^{(1)+}]
    = [I_{L, S}^{(1)}: P_{L, S}^{(1)+}] \cdot \frac{h_S^+(k)}{h_S^+(L)}.    
\end{split}
\end{align*}
Thus using Proposition \ref{prop: Sasaki 2}, we have
\begin{align*}
    q(a) 
    = q(a') q(a'') 
    = \frac{[U_{k, S}: N(U_{L, S})]}{[ {O}_{k, S}^{\times +}: N( {O}_{L, S}^{\times +})]} \cdot E_S^+(L/k)^{-1}.
\end{align*}

On the other hand, 
\begin{align*}
    q(a) = q(\Tilde{a}) q(\phi)
    &= \frac{[\A_k^{\times}: k^\times N(\A_{L}^{\times})]}{[k^{\times} \cap N(\A_{L}^{\times}): N(L^{\times})]} \cdot q(\phi).
\end{align*}
Using Theorem \ref{thm: L_ab over k}, we obtain the formula
\begin{align*}
    E_S^+(L/k) 
    = \frac{\abs{\Sha(L/k)}}{[L_{ab} : k] \cdot q(\phi)} \cdot \frac{[U_{k, S}: N(U_{L, S})]}{[ {O}_{k, S}^{\times +}: N( {O}_{L, S}^{\times +})]}. \ \text{\qed}
\end{align*}
\end{proof}

\section{Class numbers of degree zero}\label{sec:Z}

Let $k$ be a global function field and $T$ a $k$-torus. Here we fix a separable closure $k_s$ over $k$ and denote by $\Gamma_k$ the Galois group $\Gal(k_s/k)$. Considering the degree map $\deg_k: \A_k^{\times} \to \Z$, we define $\A_k^{\times, 0} \coloneqq \ker(\deg_k)$. Suppose that $\chi_1, \dotsc, \chi_r$ is a $\Z$-basis for $\widehat{T}^{\Gamma_k}$. Consider the map
\begin{align*}
    T(\A_k) \xrightarrow[]{\chi = (\chi_1, \dotsc, \chi_r)} (\A_k^{\times})^r \xrightarrow[]{(\deg_k)_i} \Z^r \xrightarrow{} 0.
\end{align*}
We define $T(\A_k)^0 \coloneqq \ker(\deg \circ \chi)$ and set the 
\emph{class group of degree zero of $T$} as
\begin{equation}\label{eq:Cl0T}
    \Cl^0(T) \coloneqq \frac{T(\A_k)^0}{T(k) U_T},
\end{equation}
where $U_T = \prod_v T(\mathcal{O}_v)$ is the maximal open compact subgroup of $T(\A_k)$. The order of this class group is finite an d is denoted by $h^0(T) \coloneqq \abs{\Cl^0(T)}$, called the \emph{class number of degree zero} of $T$.

Now let $L = \prod_{i = 1}^r K_i$ be an \'etale $k$-algebra and $T^L = R_{L/k} \mathbb{G}_{m,L}$. We set
\begin{equation}
    \label{eq:h0L}
    h^0(L) \coloneqq h^0(T^L), \quad h^0(k) \coloneqq \abs{\A_k^{\times, 0}/k^{\times} \cdot U_k}.
\end{equation}
Recall that $T_{L/k} = \ker(N_{L/k}: T^L \to \Gmk)$ is the multinorm-one torus associated to $L/k$. 
\begin{defn}
The \emph{Ono invariant of degree zero} is defined as
\begin{equation}\label{eq:E0Lk}
    E^0(L/k) \coloneqq \frac{h^0(T^L)}{h^0(k) h^0(T_{L/k})}.
\end{equation}   
\end{defn}

\begin{lemma}
If $k$ is a global function field, we have 
\begin{align*}
    h^0(T_{L/k}) = [\A_L^{(1)} \cap \A_L^{\times, 0}: L^{(1)} U_L^{(1)}],
\end{align*}
where $\A_L^{\times, 0} \coloneqq \prod_{i = 1}^r \A_{K_i}^{\times, 0}$.
\end{lemma}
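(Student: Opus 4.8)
The plan is to unwind the definition of $\Cl^0(T_{L/k})$ and identify its three constituent groups $T_{L/k}(\A_k)^0$, $T_{L/k}(k)$ and $U_{T_{L/k}}$ explicitly inside the idele group $\A_L^\times$. The easy identifications come first: since $T^L(\A_k)=\A_L^\times$ and $N\colon T^L\to\Gmk$ induces the idelic norm on adelic points, we get $T_{L/k}(\A_k)=\A_L^{(1)}$ and $T_{L/k}(k)=L^{(1)}$. For the integral part, $U_{T_{L/k}}$ is the maximal open compact subgroup of $\A_L^{(1)}$; as $\A_L^\times/U_L$ is a free abelian group of divisors, $U_L=\prod_v O_{L_v}^\times$ is the maximal compact subgroup of $\A_L^\times$, and intersecting with the closed subgroup $\A_L^{(1)}$ gives $U_{T_{L/k}}=U_L^{(1)}$.

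The crux is the identity $T_{L/k}(\A_k)^0=\A_L^{(1)}\cap\A_L^{\times,0}$, which rests on computing $\widehat{T}_{L/k}^{\Gamma_k}$ together with the degree map it induces. Dualizing $1\to T_{L/k}\to T^L\xrightarrow{N}\Gmk\to 1$ gives $0\to\Z\xrightarrow{\widehat N}\widehat{T^L}\to\widehat{T}_{L/k}\to 0$, where $\widehat{T^L}=\bigoplus_i\Ind_{H_i}^{\Gamma_k}\Z$ with $H_i=\Gal(k_s/K_i)$, and $\widehat N(1)=\sum_i\sigma_i$, $\sigma_i$ being the orbit-sum generator of $(\Ind_{H_i}^{\Gamma_k}\Z)^{\Gamma_k}$ that corresponds to the norm character $x=(x_1,\dots,x_r)\mapsto N_{K_i/k}(x_i)$. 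Taking $\Gamma_k$-invariants and using $H^1(\Gamma_k,\Z)=\Hom(\Gamma_k,\Z)=0$ yields $\widehat{T}_{L/k}^{\Gamma_k}\cong\Z^r/\Z\cdot(1,\dots,1)$, a free group of rank $r-1$ spanned by the images of the $\sigma_i$.

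Next I would compute the resulting degree map. Using the local identity $\deg_k\circ N_{K_i/k}=\deg_{K_i}$ on $\A_{K_i}^\times$, the composite $\deg\circ\chi\colon\A_L^{(1)}\to\Z^{r-1}$ is, under the identification above, the map $x\mapsto(\deg_{K_i}(x_i))_i\bmod\Z(1,\dots,1)$. Applying $\deg_k$ to the relation $\prod_iN_{K_i/k}(x_i)=1$ shows the vector $(\deg_{K_i}(x_i))_i$ lies in the hyperplane $\{\sum_id_i=0\}$, which meets the line $\Z(1,\dots,1)$ only in $0$; hence the kernel of $\deg\circ\chi$ is exactly $\{x:\deg_{K_i}(x_i)=0\ \forall i\}=\A_L^{(1)}\cap\A_L^{\times,0}$. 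Finally I would note that $L^{(1)}U_L^{(1)}\subseteq\A_L^{\times,0}$, since global elements have degree zero by the product formula and units are everywhere of degree zero, so that $h^0(T_{L/k})=[\A_L^{(1)}\cap\A_L^{\times,0}:L^{(1)}U_L^{(1)}]$ as claimed.

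The main obstacle is the character-group computation in the second step and correctly pinning down the degree map it induces on $\A_L^{(1)}$. Once the identity $\deg_k\circ N_{K_i/k}=\deg_{K_i}$ is in hand, the decisive—and slightly surprising—simplification is that the degree-zero condition cut out by $\widehat{T}_{L/k}^{\Gamma_k}$ combines with the norm-one condition to collapse to the component-wise degree-zero condition.
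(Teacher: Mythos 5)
Your proposal is correct and follows essentially the same route as the paper: identify $T_{L/k}(\A_k)=\A_L^{(1)}$, $T_{L/k}(k)=L^{(1)}$ and $U_{T_{L/k}}=U_L^{(1)}$, then use the invariant character lattice $\widehat{T}_{L/k}^{\Gamma_k}\cong \Z^r/\Z(1,\dots,1)$, spanned by the images of the norm characters $N_i$, to show that the degree-zero condition combined with the norm-one condition collapses to $\deg_{K_i}(x_i)=0$ for all $i$ (the paper states this description directly from its diagram, leaving implicit the spanning/hyperplane details you spell out). One small normalization slip worth noting: in general $\deg_k\circ N_{K_i/k}=[\F_{q_i}:\F_q]\cdot\deg_{K_i}$ rather than $\deg_{K_i}$ when the constant field $\F_{q_i}$ of $K_i$ is larger than $\F_q$, but since the two maps are proportional by a positive constant their kernels coincide and your argument goes through verbatim with the weighted hyperplane $\sum_i [\F_{q_i}:\F_q]\,d_i=0$, which still meets $\Z(1,\dots,1)$ only in $0$.
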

\begin{proof}
Denoting the norm map by $N_i = N_{K_i/k}$, we have the following commutative diagram
\begin{center}
    \begin{tikzcd}
    & \A_L^{\times, 0} \arrow[d, hook] & \\
    T_{L/k}(\A_k) \arrow[r, hook] \arrow[rd, "\chi"] & \A_L^{\times}  \arrow[d, "(N_i)_i"] \arrow[rdd, bend left = 30, "N_{L/k}"] & \\
    & (\A_k^{\times})^r \arrow[d, twoheadrightarrow, "(\deg_k)_i"'] \arrow[rd, "\prod"] & \\
    & \Z^r & \A_k^{\times}
    \end{tikzcd}
\end{center}
where $\prod(a_1, \dotsc, a_r) = \prod_{i = 1}^r a_i$. We have
\begin{align*}
    T_{L/k}(\A_k)^0 
    = \left\{ x = (x_i) \in \A_L^{\times}: \prod\limits_{i = 1}^r N_i(x_i) = 1, \deg \circ N_i(x_i) = 0 \, \text{for all $i$}\, \right\} 
    = \A_L^{(1)} \cap \A_L^{\times, 0}.
\end{align*}
Also, since $L^{\times}$ and $U_L$ are already subsets of $\A_L^{\times, 0}$, we have $T_{L/k}(k) = L^{(1)} \cap \A_L^{\times, 0} = L^{(1)}$ and $U_{T_{L/k}} = U_L^{(1)} \cap \A_L^{\times, 0} = U_L^{(1)}$. This completes the proof. So \[ \Cl^0(T_{L/k})=T_{L/k}(\A_k)^0/T_{L/k}(k) U_{T_{L/k}}=(\A_L^{(1)} \cap \A_L^{\times, 0})/L^{(1)}U_L^{(1)}. \] 
This completes the proof. \qed
\end{proof}

In the following, for an abelian subgroup $A \subset \A_L$, we set $A^0 \coloneqq A \cap \A_L^{\times, 0}$.

Let $\F_q$ be the constant subfield of $k$ and $X$ the geometrically connected projective smooth algebraic curve over $\mathbb{F}_q$ with $k = \mathbb{F}_q(X)$. Denote by $\abs{X}$ the set of closed points of $X$. Recall that the group of divisors on $X$ is
\begin{align*}
    \Div(X) = \sum\limits_{P \in \abs{X}} n_P P, \quad n_P = 0 \text{ for all but finitely many } P \in \abs{X}
\end{align*}
and the degree map on $\Div(X)$,
\begin{align*}
    \deg(D) = \sum\limits_{P \in \abs{X}} n_P \deg(P) \quad \text{ for } D = \sum\limits_{P \in \abs{X}} n_P P, \text{ where } \deg(P) = [k(P): k].
\end{align*}
The group of divisors of degree zero is denoted as $\Div^0(X)$. Now for any rational function $f\in k^\times$ on $X$, we define
\begin{align*}
    \mathrm{div}(f) \coloneqq \sum\limits_{P \in \abs{X}} \ord_P(f) P
\end{align*}
and the group of principal divisors
\begin{align*}
    P(X) := \{ \mathrm{div}(f): f \in k^{\times}\}.
\end{align*}
Since $P(X) \subset \Div^0(X)$, we can consider the quotient $\Pic^0(X) \coloneqq \Div^0(X)/P(X)$. Finally we have the isomorphisms
\begin{align*}
    \Pic^0(X) \simeq \frac{\A_k^{(1)}}{k^{\times} \prod_v O_v^{\times}}, \quad \Div^0(X) \simeq \frac{\A_k^{(1)}}{\prod_v O_v^{\times}}.
\end{align*}

Recall $L=\prod_{i=1}^r K_i$. For each $i$, let $Y_i \to X$ be a finite cover over $\mathbb{F}_q$ with $K_i = \mathbb{F}_q(Y_i)$. Note that $Y_i$ may not be geometrically connected over $\mathbb{F}_q$.
Let $\mathbb{F}_{q_i} = \Gamma(Y_i, \mathcal{O}_{Y_i})$ be the constant subfield of $K_i$. 
We define similarly $\Div(Y_i)$ to be the group consisting of $\sum\limits_{P \in \abs{Y_i}} n_P P$. Although different definitions of the degree of a divisor on $Y_i$ may differ by a constant, the notions of $\Div^0(Y_i)$ and $\Pic^0(Y_i)$ are still well-defined.

Similarly to the case when $k$ is a number field, we shall write $I_k^0 = \Div^0(X)$ and $P_k = P(X)$. We also define
\[ \text{$I_L^0 := \prod_{i = 1}^r I_{K_i}^0$, \quad $P_L := \prod_{i = 1}^r P_{K_i}$\quad and \quad $\Cl^0(L) := I_L^0/P_L=\prod_{i = 1}^r \Cl^0(K_i)$.} \] Finally, the norm map $N: \A_L^{\times, 0} \to \A_k^{\times, 0}$ induces a map $N: I_L^0 \to I_k^0$, which we also call norm, and we denote by $I_L^{0(1)}$ its kernel and $P_L^{0(1)} := P_L^0 \cap I_L^{0(1)}$.

\begin{prop} \label{prop: class 0}
With the above notation, we have
\begin{align*}
    h^0(T_{L/k}) = \frac{[I_L^{(1)}:P_L^{(1)}][\mathbb{F}_q^{\times} \cap N(L^{\times}):N(\prod_{i = 1}^r \mathbb{F}_{q_i}^{\times})]}{[U_k \cap N(\A_L^{\times, 0}):N(U_L)]}.
\end{align*}
\end{prop}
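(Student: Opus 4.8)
The plan is to imitate the proof of Proposition~\ref{prop: Sasaki 2}, substituting the degree-zero analogues of the $S$-adelic data. Here the groups $U_{k,S},U_{L,S}$ are replaced by $U_k,U_L$, the $S$-units $O_{k,S}^\times,O_{L,S}^\times$ are replaced by the groups of constants $\mathbb{F}_q^\times,\prod_{i=1}^r\mathbb{F}_{q_i}^\times$, and the full idele groups $\A_k^\times,\A_L^\times$ are replaced by the degree-zero idele groups $\A_k^{\times,0},\A_L^{\times,0}$. First I would record the two short exact sequences feeding the snake lemma. On the idelic side, the identification $\A_L^{\times,0}/U_L\simeq I_L^0$ (coming from $\Div^0(Y_i)\simeq\A_{K_i}^{\times,0}/U_{K_i}$) gives the exact row $1\to U_L\to\A_L^{\times,0}\to I_L^0\to 1$, lying over $1\to U_k\to\A_k^{\times,0}\to I_k^0\to 1$ via the norm. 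On the global side, since the global units of each $K_i$ are exactly the nonzero constants $\mathbb{F}_{q_i}^\times$ and $\mathrm{div}$ has image $P_{K_i}$, taking the product over $i$ yields the exact row $1\to\prod_i\mathbb{F}_{q_i}^\times\to L^\times\to P_L\to 1$, lying over $1\to\mathbb{F}_q^\times\to k^\times\to P_k\to 1$, again via the norm.

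Next I would apply the snake lemma to each of these two squares. Since $N$ carries the degree-zero groups into the degree-zero groups and the induced divisorial norm $N\colon I_L^0\to I_k^0$ has kernel $I_L^{0(1)}$, the first square yields
\[ 1\to U_L^{(1)}\to\A_L^{(1)}\cap\A_L^{\times,0}\to I_L^{0(1)}\xrightarrow{\ \delta_1\ }U_k/N(U_L)\to\cdots \]
with ${\rm Im}\,\delta_1=\bigl(U_k\cap N(\A_L^{\times,0})\bigr)/N(U_L)$, while the second square yields
\[ 1\to\Bigl(\textstyle\prod_i\mathbb{F}_{q_i}^\times\Bigr)^{(1)}\to L^{(1)}\to P_L^{0(1)}\xrightarrow{\ \delta_2\ }\mathbb{F}_q^\times/N\Bigl(\textstyle\prod_i\mathbb{F}_{q_i}^\times\Bigr)\to\cdots \]
with ${\rm Im}\,\delta_2=\bigl(\mathbb{F}_q^\times\cap N(L^\times)\bigr)/N(\prod_i\mathbb{F}_{q_i}^\times)$; both images are finite (finiteness of $U_k/N(U_L)$ following from local norm surjectivity at the unramified places).

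I would then assemble these into a single commutative diagram of short exact sequences with vertical inclusions $f',f,f''$, exactly as in Proposition~\ref{prop: Sasaki 2}: the top row is $1\to(\A_L^{(1)}\cap\A_L^{\times,0})/U_L^{(1)}\to I_L^{0(1)}\to(U_k\cap N(\A_L^{\times,0}))/N(U_L)\to 1$ and the bottom row is $1\to L^{(1)}/(\prod_i\mathbb{F}_{q_i}^\times)^{(1)}\to P_L^{0(1)}\to(\mathbb{F}_q^\times\cap N(L^\times))/N(\prod_i\mathbb{F}_{q_i}^\times)\to 1$. Multiplicativity of the $q$-symbol gives $[I_L^{0(1)}:P_L^{0(1)}]=q(f)=q(f')\,q(f'')$. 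The decisive identification is $q(f')=[\A_L^{(1)}\cap\A_L^{\times,0}:L^{(1)}U_L^{(1)}]$, which by the preceding Lemma is exactly $h^0(T_{L/k})$; here one uses $L^\times\cap U_L=\prod_i\mathbb{F}_{q_i}^\times$ to identify the left-hand kernels. On the other hand $q(f'')=[U_k\cap N(\A_L^{\times,0}):N(U_L)]\big/[\mathbb{F}_q^\times\cap N(L^\times):N(\prod_i\mathbb{F}_{q_i}^\times)]$. Solving the relation $[I_L^{0(1)}:P_L^{0(1)}]=q(f')\,q(f'')$ for $h^0(T_{L/k})$ then gives the asserted formula.

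I expect the only genuine obstacle to be the bookkeeping confirming that everything lives inside the degree-zero groups. Concretely, one must check that $\mathrm{div}$ realizes the isomorphisms $\A_L^{\times,0}/U_L\simeq I_L^0$ and $L^\times/\prod_i\mathbb{F}_{q_i}^\times\simeq P_L$, that the idelic norm restricts compatibly to the degree-zero subgroups and induces $N\colon I_L^0\to I_k^0$ with kernel $I_L^{0(1)}$, and that the two connecting maps land in the stated finite quotients. Once these identifications are in place the computation is formally identical to Sasaki's argument; the one essential input specific to the empty-$S$ function-field setting is that $L^\times$ and $U_L$ already lie inside $\A_L^{\times,0}$, which is precisely what makes $q(f')$ equal the degree-zero class number $h^0(T_{L/k})$.
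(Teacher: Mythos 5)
Your proposal is correct and takes essentially the same route as the paper's own proof: the same two norm-compatible short exact sequences (the idelic row $1\to U_L\to \A_L^{\times,0}\to I_L^0\to 1$ and the global row $1\to \prod_i\mathbb{F}_{q_i}^{\times}\to L^{\times}\to P_L\to 1$), the snake lemma applied to each, the assembled diagram with vertical inclusions $f'$, $f$, $f''$, the multiplicativity $q(f)=q(f')\,q(f'')$, and the identification $q(f')=[\A_L^{(1)}\cap\A_L^{\times,0}:L^{(1)}U_L^{(1)}]=h^0(T_{L/k})$ via the preceding lemma. The only divergence is notational: you write $I_L^{0(1)}$, $P_L^{0(1)}$ where the proposition's statement writes $I_L^{(1)}$, $P_L^{(1)}$ for the same kernels inside the degree-zero groups, and your added remarks (finiteness of $U_k/N(U_L)$ from norm surjectivity on local units at unramified places, and $L^{\times}\cap U_L=\prod_i\mathbb{F}_{q_i}^{\times}$) are correct points the paper leaves implicit.
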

\begin{proof}
Consider the two commutative diagrams
\begin{center}
    \begin{tikzcd}
    0 \arrow[r] & U_L \arrow[r] \arrow[d, "N"] & \A_L^{\times, 0} \arrow[r] \arrow[d, "N"] & I_L^0 \arrow[r] \arrow[d, "N"] & 0 \\
    0 \arrow[r] & U_k \arrow[r] & \A_k^{\times, 0} \arrow[r] & I_k^0 \arrow[r] & 0
    \end{tikzcd}
\end{center}
and
\begin{center}
    \begin{tikzcd}
    1 \arrow[r] & \prod\limits_{i = 1}^r \mathbb{F}_{q_i}^{\times} \arrow[r] \arrow[d, "N_{L/k}"] & L^{\times} \arrow[r, "\mathrm{div}"] \arrow[d, "N_{L/k}"] & P_L \arrow[r] \arrow[d, "N"] & 1 \\
    1 \arrow[r] & \mathbb{F}_q^{\times} \arrow[r] & k^{\times} \arrow[r, "\mathrm{div}"] & P_k \arrow[r] & 1
    \end{tikzcd}
\end{center}
where the norm map $N_{L/k} = \prod N_{K_i/k}$ restricted to $\prod_{i = 1}^r \mathbb{F}_{q_i}^{\times}$ is given by
\begin{align*}
    N_{L/k}((x_i))=\prod_{i=1}^r N_{K_i/k}(x_i) = \prod_{i=1}^r \left(N_{\mathbb{F}_{q_i}/\mathbb{F}_q}(x_i)\right)^{[K_i/k]/[\mathbb{F}_{q_i}: \mathbb{F}_q]} \quad \text{ for } x_i \in \mathbb{F}_{q_i}
\end{align*}
and we denote by $\left(\prod_{i = 1}^r \mathbb{F}_{q_i}^{\times} \right)^{(1)}$ its kernel. By the snake lemma, each diagram gives an exact sequence, respectively:
\begin{center}
    \begin{tikzcd}
    0 \arrow[r] & U_L^{(1)} \arrow[r] & \A_L^{0(1)} \arrow[r] & I_L^{(1)} \arrow[r, "\delta_1"] & U_k/N(U_L) \arrow[r] & \A_k^{\times, 0}/N(\A_L^{\times, 0}) \arrow[r] & \dotsb; 
    \end{tikzcd}
\end{center}
\begin{center}
    \begin{tikzcd}[column sep = scriptsize]
    1 \arrow[r] & \left(\prod_{i = 1}^r \mathbb{F}_{q_i}^{\times} \right)^{(1)} \arrow[r] & L^{(1)} \arrow[r] & P_L^{(1)} \arrow[r, "\delta_2"] & \mathbb{F}_q^{\times}/N(\prod_{i = 1}^r \mathbb{F}_{q_i}^{\times}) \arrow[r] & k^{\times}/N(L^{\times}) \arrow[r] & \dotsb.
    \end{tikzcd}
\end{center}
The images $\mathrm{Im}\, \delta_1$ and $\mathrm{Im}\, \delta_2$ are finite abelian groups. Thus we have
\begin{center}
    \begin{tikzcd}
    1 \arrow[r] & \dfrac{\A_L^{\times, 0} \cap \A_L^{(1)}}{U_L^{(1)}} \arrow[r] & I_L^{(1)} \arrow[r] & \dfrac{U_k \cap N(\A_L^{\times, 0})}{N(U_L)} \arrow[r] & 1 \\
    1 \arrow[r] & \dfrac{L^{(1)}}{\left(\prod_{i = 1}^r \mathbb{F}_{q_i}^{\times} \right)^{(1)}} \arrow[r] \arrow[u, hook, "f'"] & P_L^{(1)} \arrow[r] \arrow[u, hook, "f"] & \dfrac{\mathbb{F}_q^{\times} \cap N(L^{\times})}{N(\prod_{i = 1}^r \mathbb{F}_{q_i}^{\times})} \arrow[r] \arrow[u, hook, "{f"}"] & 1.
    \end{tikzcd}
\end{center}
Since $q(f) = q(f') q(f")$,
\begin{align*}
    [I_L^{(1)}: P_L^{(1)}] 
    = [\A_L^{\times, 0} \cap \A_L^{(1)}:L^{(1)} U_L^{(1)}] 
    \frac{[U_k \cap N(\A_L^{\times, 0}):N(U_L)]}{[\mathbb{F}_q^{\times} \cap N(L^{\times}):N(\prod_{i = 1}^r \mathbb{F}_{q_i}^{\times})]}. 
    \  \text{\qed}
\end{align*}
\end{proof}

\begin{lem} \label{lem: describe first term E^O(L/k)}
We have
\begin{align*}
    \frac{[k^{\times} \cap N(\A_L^{\times,0}):N(L^{\times, 0})]}{[\A_k^{\times, 0} : k^{\times} N(\A_L^{\times, 0})]} = \frac{\abs{\Sha(L/k)}}{[L_{ab}:k]} \cdot q(\phi^0),
\end{align*}
where $\phi^0: \A_k^{\times, 0}/N(\A_L^{\times, 0}) \to \A_k^{\times}/N(\A_L^{\times})$ is the canonical homomorphism.
\end{lem}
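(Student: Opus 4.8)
The plan is to compare the degree-zero quotient with its full analogue through the canonical map $\phi^0$, and to extract the identity from multiplicativity of the $q$-symbol.

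First I would record the elementary reduction coming from the product formula: every principal idele has degree zero, so $k^\times\subset\A_k^{\times,0}$ and $L^\times\subset\A_L^{\times,0}$; in particular $L^{\times,0}=L^\times$ and $N(L^{\times,0})=N(L^\times)$. I then introduce the canonical homomorphism
\[
\tilde a^0\colon k^\times/N(L^\times)\longrightarrow\A_k^{\times,0}/N(\A_L^{\times,0}),
\]
induced by the inclusion $k^\times\hookrightarrow\A_k^{\times,0}$. Its kernel is $(k^\times\cap N(\A_L^{\times,0}))/N(L^\times)$ and its cokernel is $\A_k^{\times,0}/k^\times N(\A_L^{\times,0})$, so the left-hand side of the lemma is precisely $q(\tilde a^0)^{-1}$.

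Second, I bring in the full-idele map $\tilde a\colon k^\times/N(L^\times)\to\A_k^\times/N(\A_L^\times)$ already used in the proof of Theorem~\ref{thm:ESLk}. Here $\ker\tilde a=\Sha(L/k)$ and, by Theorem~\ref{thm: L_ab over k}, $\coker\tilde a=\A_k^\times/k^\times N(\A_L^\times)\simeq\Gal(L_{ab}/k)$, both finite, so $q(\tilde a)=[L_{ab}:k]/\abs{\Sha(L/k)}$. The structural point is the factorization $\tilde a=\phi^0\circ\tilde a^0$ through $\A_k^{\times,0}\hookrightarrow\A_k^\times$, valid because all three maps are induced by inclusions of $k^\times$. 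Given this factorization, I invoke the six-term kernel--cokernel exact sequence
\[
0\to\ker\tilde a^0\to\ker\tilde a\to\ker\phi^0\to\coker\tilde a^0\to\coker\tilde a\to\coker\phi^0\to0.
\]
Once all six terms are finite, the alternating product of their orders is $1$, which is exactly the multiplicativity $q(\tilde a)=q(\tilde a^0)\,q(\phi^0)$. Rearranging yields
\[
\text{LHS}=q(\tilde a^0)^{-1}=\frac{q(\phi^0)}{q(\tilde a)}=\frac{\abs{\Sha(L/k)}}{[L_{ab}:k]}\cdot q(\phi^0),
\]
as claimed.

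Finally, the finiteness of the six terms is where the function-field hypotheses really enter, and I expect this to be the only genuine obstacle. Two terms are immediate: $\ker\tilde a^0\subset\ker\tilde a=\Sha(L/k)$, and $\coker\phi^0$ is a quotient of $\coker\tilde a\simeq\Gal(L_{ab}/k)$. For $\coker\tilde a^0=\A_k^{\times,0}/k^\times N(\A_L^{\times,0})$ I would use the finiteness of $\Pic^0(X)\simeq\A_k^{\times,0}/k^\times U_k$ together with the finiteness of the local norm index $[U_k:N(U_L)]$, exhibiting $\coker\tilde a^0$ as a quotient of $\A_k^{\times,0}/k^\times N(U_L)$, an extension of finite groups. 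The remaining term $\ker\phi^0$ is then squeezed between finite groups in the exact sequence above and so is finite as well. With all these finiteness statements in place the rest of the argument is purely formal manipulation of $q$-symbols.
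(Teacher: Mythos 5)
Your proof is correct, but it takes a genuinely different route from the paper's. You reduce everything to the factorization $\tilde a=\phi^0\circ\tilde a^0$ through the degree-zero quotient $\A_k^{\times,0}/N(\A_L^{\times,0})$, apply the kernel--cokernel six-term exact sequence to get the multiplicativity $q(\tilde a)=q(\tilde a^0)\,q(\phi^0)$, and combine this with $q(\tilde a)=[L_{ab}:k]/\abs{\Sha(L/k)}$ (from Theorem~\ref{thm: L_ab over k}) and the observation that the left-hand side of the lemma is exactly $q(\tilde a^0)^{-1}$. The paper never forms this composition: it computes $q(\phi^0)$ explicitly, introduces the auxiliary map $b:\A_k^{\times,0}/k^{\times}N(\A_L^{\times,0})\to\A_k^{\times}/k^{\times}N(\A_L^{\times})$, identifies both $\coker\phi^0$ and $\coker b$ with $\Z/\deg_k(N(\A_L^{\times}))$ (so these cancel), relates $\ker b$ to $\ker\phi^0$ by a short exact sequence, and then invokes the lattice index identity $[A:B]=[A\cap C:B\cap C][AC:BC]$, which it proves separately. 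Your version buys economy: it avoids the explicit kernel/cokernel computations and the index identity altogether, and it makes the degree-zero case structurally parallel to the paper's own treatment of the narrow case in Theorem~\ref{thm:ESLk}, where $q(a)=q(\tilde a)q(\phi)$ is used for the composition $a=\tilde a\circ\phi$ (there the comparison map is precomposed, here $\phi^0$ is postcomposed). What the paper's computation buys in exchange is explicit information, notably $\coker\phi^0\simeq\Z/\deg_k(N(\A_L^{\times}))$, and finiteness falls out of the identifications rather than requiring the separate verification that your formal argument needs; you correctly supply it (finiteness of $\Sha(L/k)$ and $\Gal(L_{ab}/k)$, of $\Pic^0(X)$ together with $[U_k:N(U_L)]$ for $\coker\tilde a^0$, and the squeeze for $\ker\phi^0$), and this is indeed the only nontrivial point in your route. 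Your preliminary reduction $L^{\times,0}=L^{\times}$, hence $N(L^{\times,0})=N(L^{\times})$, via the product formula is also needed in the paper, where it is used tacitly when the numerator $[k^{\times}\cap N(\A_L^{\times,0}):N(L^{\times,0})]$ is rewritten with $N(L^{\times})$.
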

\begin{proof}
Noting that $\A_k^{\times, 0}$ is the kernel of the degree map $\deg_k: \A_k^{\times} \to \Z$, we have
\begin{align*}
    \ker \phi^0 = \frac{\A_k^{\times, 0} \cap N(\A_L)}{N(\A_L^{\times, 0})}, \quad
    \coker \phi^0 = \frac{\A_k^{\times}}{\A_k^{\times, 0} N(\A_L^{\times})} \simeq \frac{\Z}{\deg_k (N(\A_L^{\times}))}
\end{align*}
and then
\begin{align*}
    q(\phi^0) = \frac{[\Z: \deg_k (N(\A_L^{\times})])}{[\A_k^{\times, 0} \cap N(\A_L^{\times}): N(\A_L^{\times, 0})]}.
\end{align*}
Let $b$ be the canonical homomorphism $b: \A_k^{\times, 0}/\left( k^{\times} N(\A_L^{\times, 0}) \right) \to \A_k^{\times}/\left(k^{\times} N(\A_L^{\times})\right)$. Then we have
\begin{align*}
    \ker b = \frac{k^{\times} \left( \A_k^{\times, 0} \cap N(\A_L^{\times}) \right)}{k^{\times} N(\A_L^{\times, 0})}, \quad
    \coker b = \frac{\A_k^{\times}}{\A_k^{\times, 0} k^{\times} N(\A_L^{\times})} \simeq \frac{\Z}{\deg_k (N(\A_L^{\times}))}.
\end{align*}
From the finiteness of domain and codomain of $b$ and the exact sequence
\begin{center}
    \begin{tikzcd}
    1 \arrow[r]
    & \dfrac{k^{\times} \cap N(\A_L^{\times})}{k^{\times} \cap N(\A_L^{\times, 0})} \arrow[r]
    & \dfrac{\A_k^{\times, 0} \cap N(\A_L^{\times})}{N(\A_L^{\times, 0})} \arrow[r]
    & \ker b \arrow[r]
    & 1
    \end{tikzcd}
\end{center}
we compute that
\begin{align*}
    \frac{[\A_k^{\times}: k^{\times} N(\A_L^{\times})]}{[\A_k^{\times, 0}: k^{\times} N(\A_L^{\times, 0})]} 
    &= q(b) 
    = \frac{[\Z: \deg_k (N(\A_L^{\times}))]}{\abs{\ker b}} \\
    &\overset{(*)}{=} q(\phi^0) \cdot [k^{\times} \cap N(\A_L^{\times}): k^{\times} \cap N(\A_L^{\times, 0})] \\
    &= q(\phi^0) \cdot \frac{[k^{\times} \cap N(\A_L^{\times}): N(L^{\times})]}{[k^{\times} \cap N(\A_L^{\times, 0}): N(L^{\times})]}.
\end{align*}

In $(*)$ we use the fact that for abelian subgroups $A$, $B$, and $C$ of an abelian group $G$ with $B \subset A$, we have
\begin{align*}
    [A:B] = [A \cap C: B \cap C][AC:BC].
\end{align*}
This follows from considering the exact sequences
\begin{center}
    \begin{tikzcd}
    0 \arrow[r]
    & (A \cap BC)/B \arrow[r]
    & A/B \arrow[r]
    & AC/BC \arrow[r]
    & 0;
    \end{tikzcd}
    \begin{tikzcd}
    0 \arrow[r]
    & B \cap C \arrow[r]
    & A \cap C \arrow[r]
    & (A \cap BC)/B \arrow[r]
    & 0.
    \end{tikzcd}
\end{center}
Finally, using Theorem \ref{thm: L_ab over k} we obtain
\begin{align*}
    \frac{[k^{\times} \cap N(\A_L^{\times,0}):N(L^{\times})]}{[\A_k^{\times, 0} : k^{\times} N(\A_L^{\times, 0})]} 
    = q(\phi^0) \cdot \frac{[k^{\times} \cap N(\A_L^{\times}): N(L^{\times})]}{[\A_k^{\times}: k^{\times} N(\A_L^{\times})]}
    = q(\phi^0) \cdot \frac{\abs{\Sha(L/k)}}{[L_{ab}:k]}. \ \text{\qed}
\end{align*}
\end{proof}

\begin{thm}\label{thm:E0Lk}
Let the notation be as above. We have
\begin{align*}
    E^0(L/k) = \frac{\abs{\Sha(L/k)}}{[L_{ab}: k]} \cdot q(\phi^0) \cdot \frac{[U_k : N(U_L)]}{[\mathbb{F}_q^{\times} : N(\prod_i \mathbb{F}_{q_i}^{\times})]}.
\end{align*}
\end{thm}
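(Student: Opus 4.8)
The plan is to mirror the proof of Theorem~\ref{thm:ESLk}, replacing each $S$-arithmetic object by its degree-zero function-field analogue: the $S$-units $O_{k,S}^\times$ and $O_{L,S}^\times$ become the constants $\mathbb{F}_q^\times$ and $\prod_i\mathbb{F}_{q_i}^\times$, the $S$-idele units $U_{k,S}$ become the maximal compact $U_k=\prod_v O_v^\times$, and the full idele groups are replaced by their degree-zero parts $\A_k^{\times,0}$ and $\A_L^{\times,0}$. Concretely, I would introduce the canonical homomorphism
\[
a^0\colon k^\times/N(L^\times)\longrightarrow \A_k^{\times,0}/N(\A_L^{\times,0})
\]
and compute its $q$-symbol in two independent ways.

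First I would factor $a^0$ through the unit filtration via a commutative diagram with exact rows
\begin{center}
\begin{tikzcd}[column sep=small]
1 \arrow[r] & \mathbb{F}_q^\times/(\mathbb{F}_q^\times\cap N(L^\times)) \arrow[r]\arrow[d,"a'^0"] & k^\times/N(L^\times) \arrow[r]\arrow[d,"a^0"] & k^\times/\mathbb{F}_q^\times N(L^\times) \arrow[r]\arrow[d,"a''^0"] & 1\\
1 \arrow[r] & U_k/(U_k\cap N(\A_L^{\times,0})) \arrow[r] & \A_k^{\times,0}/N(\A_L^{\times,0}) \arrow[r] & \A_k^{\times,0}/U_k N(\A_L^{\times,0}) \arrow[r] & 1,
\end{tikzcd}
\end{center}
which gives $q(a^0)=q(a'^0)q(a''^0)$. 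For $q(a'^0)$ I would use a second diagram, exactly analogous to the one carrying $b',b$ in Theorem~\ref{thm:ESLk}, whose left and middle vertical maps are the norms $\prod_i\mathbb{F}_{q_i}^\times\to\mathbb{F}_q^\times$ and $U_L\to U_k$; this yields $q(a'^0)=\frac{[U_k:N(U_L)]}{[\mathbb{F}_q^\times:N(\prod_i\mathbb{F}_{q_i}^\times)]}\cdot\frac{[\mathbb{F}_q^\times\cap N(L^\times):N(\prod_i\mathbb{F}_{q_i}^\times)]}{[U_k\cap N(\A_L^{\times,0}):N(U_L)]}$.

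For $q(a''^0)$ I would apply the snake lemma to the degree-zero ideal-class diagram with rows $1\to P_L\to I_L^0\to\Cl^0(L)\to1$ and $1\to P_k\to I_k^0\to\Cl^0(k)\to1$ (every principal divisor having degree zero), and then use the dictionary recorded before Proposition~\ref{prop: class 0} to identify the connecting terms $P_k/N'(P_L)\simeq k^\times/\mathbb{F}_q^\times N(L^\times)$ and $I_k^0/N(I_L^0)\simeq\A_k^{\times,0}/U_k N(\A_L^{\times,0})$, under which the induced map is $a''^0$. As in Theorem~\ref{thm:ESLk} this produces $q(a''^0)=q(N'')\,[I_L^{0(1)}:P_L^{0(1)}]=[I_L^{0(1)}:P_L^{0(1)}]\cdot h^0(k)/h^0(L)$, where $q(N'')=h^0(k)/h^0(L)$ because $N''\colon\Cl^0(L)\to\Cl^0(k)$ is a map of finite groups. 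Multiplying $q(a'^0)q(a''^0)$ and substituting the value of $h^0(T_{L/k})$ from Proposition~\ref{prop: class 0}, all the index terms collapse to $q(a^0)=\frac{[U_k:N(U_L)]}{[\mathbb{F}_q^\times:N(\prod_i\mathbb{F}_{q_i}^\times)]}\cdot E^0(L/k)^{-1}$.

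Finally I would evaluate $q(a^0)$ straight from its definition, $q(a^0)=\frac{[\A_k^{\times,0}:k^\times N(\A_L^{\times,0})]}{[k^\times\cap N(\A_L^{\times,0}):N(L^\times)]}$, which is precisely the reciprocal of the quantity computed in Lemma~\ref{lem: describe first term E^O(L/k)}, namely $\frac{[L_{ab}:k]}{\abs{\Sha(L/k)}\,q(\phi^0)}$. Equating the two expressions for $q(a^0)$ and solving for $E^0(L/k)$ gives the asserted formula. I expect the only delicate points to be the bookkeeping of finiteness for all the groups and $q$-symbols in the degree-zero setting (so that multiplicativity $q(f)=q(f')q(f'')$ legitimately applies) and checking that the idele/divisor dictionary genuinely intertwines the norm maps, so that the two vertical identifications above are compatible with $a''^0$. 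Once Lemma~\ref{lem: describe first term E^O(L/k)} is available, it plays exactly the role that Theorem~\ref{thm: L_ab over k} plays in the $S$-nonempty case, with the factor $q(\phi^0)$ accounting for the passage from degree-zero to full norm-residue groups.
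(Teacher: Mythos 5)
Your proposal is correct and follows essentially the same route as the paper's own proof: the same map $a\colon k^\times/N(L^\times)\to \A_k^{\times,0}/N(\A_L^{\times,0})$, the same two-diagram factorization $q(a)=q(a')q(a'')$ with the constants $\mathbb{F}_q^\times$, $\prod_i\mathbb{F}_{q_i}^\times$ and the degree-zero divisor class groups, the substitution of Proposition~\ref{prop: class 0}, and the final evaluation of $q(a)$ via Lemma~\ref{lem: describe first term E^O(L/k)} in place of Theorem~\ref{thm: L_ab over k}. Your bookkeeping is in fact slightly cleaner than the paper's (e.g.\ writing $[I_L^{0(1)}:P_L^{0(1)}]$ where the paper's displayed computation has a typographical slip), but there is no substantive difference in the argument.
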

\begin{proof}
Let $a: k^{\times}/N(L^{\times}) \to \A_k^{\times, 0}/N(\A_L^{\times, 0})$ be the canonical homomorphism. Consider the commutative diagram
\begin{center}
    \begin{tikzcd}
    1 \arrow[r] & \mathbb{F}_q^{\times} \arrow[r] \arrow[d, "{a'}"] & k^{\times}/N(L^{\times}) \arrow[r] \arrow[d, "a"] & k^{\times}/\mathbb{F}_q^{\times} N(L^{\times}) \arrow[r] \arrow[d, "{a''}"] & 1 \\
    1 \arrow[r] & U_k/U_k \cap N(\A_L^{\times, 0}) \arrow[r] & \A_k^{\times, 0}/N(\A_L^{\times, 0}) \arrow[r] & \A_k^{\times, 0}/U_k N(\A_L^{\times, 0}) \arrow[r] & 1
    \end{tikzcd}
\end{center}

From the commutative diagram
\begin{center}
    \begin{tikzcd}
    1 \arrow[r]
    & \mathbb{F}_q^{\times} \cap N(L^{\times})/N(\prod_i \mathbb{F}_{q_i}^{\times}) \arrow[r] \arrow[d, "{b'}"]
    & \mathbb{F}_q^{\times}/N(\prod_i \mathbb{F}_{q_i}^{\times}) \arrow[r] \arrow[d, "{b}"]
    & \mathbb{F}_q^{\times}/\mathbb{F}_q^{\times} \cap N(L^{\times}) \arrow[r] \arrow[d, "{a'}"]
    & 1 \\
    1 \arrow[r]
    & U_k \cap N(\A_L^{\times, 0})/N(U_L) \arrow[r]
    & U_k/N(U_L) \arrow[r]
    & U_k/U_k \cap N(\A_L^{\times, 0}) \arrow[r]
    & 1
    \end{tikzcd}
\end{center}
we have
\begin{align*}
    q(a') 
    = \frac{q(b)}{q(b')}
    = \frac{[U_k : N(U_L)]}{[\mathbb{F}_q^{\times} : N(\prod_i \mathbb{F}_{q_i}^{\times})]} 
    \cdot \frac{[\mathbb{F}_q^{\times} \cap N(L^{\times}) : N(\prod_i \mathbb{F}_{q_i}^{\times})]}{[U_k \cap N(\A_L^{\times, 0}) : N(U_L)]}.
\end{align*}
Now the diagram
\begin{center}
    \begin{tikzcd}
    1 \arrow[r]
    & P_L \arrow[r] \arrow[d, "{N'}"]
    & I_L^0 \arrow[r] \arrow[d, "{N}"]
    & \Cl^0(L) \arrow[r] \arrow[d, "{N''}"]
    & 1 \\
    1 \arrow[r]
    & P_k \arrow[r]
    & I_k^0 \arrow[r]
    & \Cl^0(k) \arrow[r]
    & 1
    \end{tikzcd}
\end{center}
induces the exact sequence
\begin{center}
    \begin{tikzcd}
    I_L^{0 (1)} \arrow[r]
    & \ker N'' \arrow[r, "\delta"]
    & P_k/N'(P_L) \arrow[r, "c"]
    & I_k^0/N(I_L^0) \arrow[r]
    & \coker N'' \arrow[r]
    & 1 \\
    &
    & k^{\times}/ \mathbb{F}_q^{\times} \cdot N(L^{\times}) \arrow[r, "{a''}"] \arrow[u, "\wr"]
    & \A_k^{\times, 0}/U_L \cdot N(\A_L^{\times, 0}) \arrow[u, "\wr"]
    &
    &
    \end{tikzcd}
\end{center}
Then
\begin{align*}
    q(a'') &= q(c) = \frac{\abs{\coker N''}}{\abs{\ker c}} = \abs{\coker N''} \cdot \frac{[I_L^{0(1)}: P_L]}{\abs{\ker N''}} \\
    &= q(N'') [I_L^{0(1)}: P_L] = [I_L^{0(1)}: P_L] \cdot \frac{h^0(k)}{h^0(L)}.
\end{align*}
Using Proposition \ref{prop: class 0}, we have
\begin{align*}
    q(a) = q(a') q(a'') = \frac{[U_k : N(U_L)]}{[\mathbb{F}_q^{\times} : N(\prod_i \mathbb{F}_{q_i}^{\times})]} \cdot E^0(L/k)^{-1}.
\end{align*}
On the other hand from the definition of $q(a)$,
\begin{align*}
    q(a) = \frac{[\A_k^{\times, 0}: k^{\times}  N(\A_L^{\times, 0})]}{[k^{\times} \cap N(\A_L^{\times, 0}): N(L^{\times})]}.
\end{align*}
Using Lemma \ref{lem: describe first term E^O(L/k)}, we obtain the formula
\begin{align*}
    E^0(L/k) = \frac{\abs{\Sha(L/k)}}{[L_{ab}: k]} \cdot q(\phi^0) \cdot \frac{[U_k : N(U_L)]}{[\mathbb{F}_q^{\times} : N(\prod_i \mathbb{F}_{q_i}^{\times})]}. \ \text{\qed}
\end{align*}
\end{proof}

\section{Exploration of terms in Theorem~\ref{thm:ESLk} and examples}\label{sec:E}
\def\ab{{\rm ab}}
\def\ur{{\rm ur}}
In this section we explore the terms appearing in the class number relation arising from multinorm-one tori (Theorem~\ref{thm:ESLk}).

\subsection{Local norm indices ${[U_{k,S}:N(U_{L,S})]}$}  

\subsubsection{}
Suppose that $K/k$ is a finite separable extension of local fields. We denote by $K_{\ab}$ and $K_{\ur}$ the maximal abelian and unramified subextensions of $K/k$, respectively. 
Local class field theory says that $N_{K/k}(K^\times)=N_{K_{\ab}/k} (K_{\ab}^\times)$ and we have a commutative diagram 
\[ 
\begin{tikzcd}
0 \arrow[r] & O_k^\times/N(O_{K_\ab}^\times) \arrow[r] \arrow[d, "\wr"'] &  k^\times/N(K_\ab^\times) \arrow[r]\arrow[d, "{\mathrm{Art}}", "\wr"'] & \Z/f\Z \arrow[r] \arrow[d, "\wr"']  & 0 \\
0 \arrow[r] & \Gal(K_\ab/K_{\ab,\ur}) \arrow[r] & \Gal(K_\ab/k) \arrow[r] & \Gal(\lambda/\kappa) \arrow[r]  & 0 
\end{tikzcd}
\]
where $\lambda$ and $\kappa$ are the residue fields of $K_\ab$ and $k$, respectively, and $f=[\lambda:\kappa]$ is the residue degree of $K_\ab$ over $k$. In particular, we have 
\begin{equation}
    [O_k^\times: N(O_K^\times)]=e(K_\ab/k),     
\end{equation}
the ramification index of $K_\ab$ over $k$.

\subsubsection{}
Recall \[ U_{k,S}=\prod_{v\in S} k_v^\times \times 
\prod_{v\not\in S} O_v^\times \quad \text{and} \quad U_{L,S}=\prod_{v\in S} L_v^\times \times \prod_{v\not\in S} O_{L_v}^\times,\  \text{where} \ L_v=\prod_{w|v} L_w, \] 
and we have
\[ [U_{k,S}:N(U_{L,S}]=\prod_{v\in S} [k_v^\times:N(L_v^\times)] \cdot \prod_{v\not\in S} [O_v^\times:N(O_{L_v}^\times)]. \]

Fix a separable closure $k_{v,s}$ of $k_v$ and let $k_v^\ab$ and $k_v^\ur$ be the maximal abelian and unramified extensions of $k_v$ in $k_{v,s}$, respectively. One has $\Gal(k_v^\ab/k_v^\ur)\simeq O_v^\times$. 
For each place $w|v$ of $L$, we choose an embedding $\iota:L_w\embed k_{v,s}$ over $k_v$. Since $L_{w,\ab}$ is Galois over $k_v$, its image $\iota(L_{w,\ab})$ is independent of the choice of $\iota$, which we denote by $L_{w,\ab}$ again for simplicity. 
Put 
\begin{equation}
    \label{eq:Lvab}
   L_{v,\ab} := \bigcap_{w|v} L_{w,\ab} 
\end{equation}
and 
\begin{equation}
    \label{eq:tLvab}
    \wt L_{v,\ab}:=\text{the compositum of the abelian extensions $L_{w,\ab}$ of $k_v$ for all $w|v$}.
\end{equation}
If $v$ is a finite place of $k$, we choose a finite unramified extension $k_v'$ of $k_v$ which contains the maximal unramified subextension $(\wt L_{v,\ab})_\ur$ of $\wt L_{v,\ab}/k_v$ and set
\begin{equation}\label{eq:Lpvab}
           L'_{w,\ab}:=k_v' L_{w,\ab} \quad \text{and} \quad L'_{v,\ab}:=\bigcap_{w|v} L'_{w,\ab}. 
\end{equation}
The degree $[L'_{v,\ab}:k_v']$ does not depend on the choice of $k_v'$. If there is a place $w|v$ of $L$ which is unramified in $L/k$, then $[L'_{v,\ab}:k_v']=1$. For this reason, we 
write
\begin{equation} \label{eq:ev}
    e_v(L/k):=[L'_{v,\ab}:k_v']. 
\end{equation}
When $L_v$ is an abelian field extension of $k_v$, the invariant $e_v(L/k)$ coincides with the ramification index of $v$ in $L/k$.


\begin{prop}\label{prop:4.1}
    \begin{enumerate}
        \item For any place $v$, we have $[k_v^\times:N(L_v^\times)]=[L_{v,\ab}:k_v].$
        \item For any finite place $v$, 
        we have $[O_v^\times: N(O_{L_v}^\times)]=[L'_{v,\ab}:k_v']$.
    \end{enumerate}
\end{prop}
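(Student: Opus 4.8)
The plan is to reduce both statements to the one-field computation recalled just above (the equality $[O_k^\times:N(O_K^\times)]=e(K_{\ab}/k)$) together with the dictionary of local class field theory, working throughout inside a fixed finite abelian extension $F/k_v$ containing $\wt L_{v,\ab}$, so that all identities live in the honest finite quotient $\mathcal G:=\Gal(F/k_v)\cong k_v^\times/N_{F/k_v}(F^\times)$. Since $L_v=\prod_{w\mid v}L_w$, one has $N(L_v^\times)=\prod_{w\mid v}N_{L_w/k_v}(L_w^\times)$, a product of subgroups of $k_v^\times$. By local class field theory each factor equals $N_{L_{w,\ab}/k_v}(L_{w,\ab}^\times)$ and maps under $\Art$ to $\mathcal H_w:=\Gal(F/L_{w,\ab})\subset\mathcal G$; as the correspondence is inclusion-reversing and carries products of norm groups to intersections of fields, $N(L_v^\times)$ maps to $\prod_{w}\mathcal H_w=\Gal(F/\bigcap_{w}L_{w,\ab})=\Gal(F/L_{v,\ab})$. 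Hence $[k_v^\times:N(L_v^\times)]=[L_{v,\ab}:k_v]$, which is (1).

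For (2) I would first record the identity $N_{L_w/k_v}(O_{L_w}^\times)=O_v^\times\cap N_{L_w/k_v}(L_w^\times)$: the inclusion $\subseteq$ is clear, and if a unit $u$ equals $N_{L_w/k_v}(x)$ then $0=v(u)=f\,v_{L_w}(x)$ (with $f$ the residue degree) forces $x\in O_{L_w}^\times$. Let $\mathcal I\subset\mathcal G$ be the inertia subgroup, so that $\Art$ sends $O_v^\times$ onto $\mathcal I$ and $N(L_w^\times)$ onto $\mathcal H_w$. Because $\ker\Art=N_{F/k_v}(F^\times)$ is contained in every $N(L_w^\times)$, a short diagram chase gives $\Art\bigl(N(O_{L_w}^\times)\bigr)=\mathcal I\cap\mathcal H_w$ exactly, and the same containment yields $\ker(\Art|_{O_v^\times})=N_{F/k_v}(O_F^\times)\subset N(O_{L_v}^\times)$, so passing to images preserves the index. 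Writing $F_0=F^{\mathcal I}$ for the maximal unramified subextension, we have $\mathcal I\cap\mathcal H_w=\Gal(F/F_0L_{w,\ab})$, and taking the product over $w$ gives
\[
[O_v^\times:N(O_{L_v}^\times)]=\Bigl[\mathcal I:\prod_{w\mid v}(\mathcal I\cap\mathcal H_w)\Bigr]=\Bigl[\bigcap_{w\mid v}F_0L_{w,\ab}:F_0\Bigr].
\]

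It then remains to identify $[\bigcap_{w}F_0L_{w,\ab}:F_0]$ with $[L'_{v,\ab}:k_v']$, i.e.\ to show that this degree is unchanged when the unramified base is enlarged from $F_0=(\wt L_{v,\ab})_\ur$ to any $k_v'\supseteq F_0$. I expect this to be the only delicate point, since intersection of fields does \emph{not} commute with base change in general; the saving feature is that, once the base contains $(\wt L_{v,\ab})_\ur$, each $F_0L_{w,\ab}$ (resp.\ $k_v'L_{w,\ab}=L'_{w,\ab}$) is totally ramified over $F_0$ (resp.\ $k_v'$), hence linearly disjoint from the unramified extension $k_v'/F_0$. Passing to the totally ramified compositum $M=\bigvee_{w}F_0L_{w,\ab}$ over $F_0$, restriction induces an isomorphism $\Gal(k_v'M/k_v')\xrightarrow{\sim}\Gal(M/F_0)$ matching subgroups with subgroups and subfields with their base changes; under it $\bigcap_{w}F_0L_{w,\ab}$ corresponds to $\bigcap_{w}L'_{w,\ab}=L'_{v,\ab}$, so the two degrees agree and equal $e_v(L/k)=[L'_{v,\ab}:k_v']$. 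Everything else is the standard passage among norm groups, units, and inertia supplied by local class field theory and the preceding subsection.
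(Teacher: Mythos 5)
Your proof is correct and takes essentially the same route as the paper: part (1) is the same computation with norm groups (products of norm groups corresponding to intersections of abelian extensions), and part (2) matches unit norm groups with inertia over the unramified part and then descends to $k_v'$ via linear disjointness of totally ramified extensions from unramified base change, exactly as in the paper, only recast at finite level inside $\Gal(F/k_v)$ rather than via the profinite identification $O_v^\times \simeq \Gal(k_v^{\ab}/k_v^{\ur})$. As a small bonus, your final paragraph actually proves that $[L'_{v,\ab}:k_v']$ is independent of the choice of $k_v'$, a fact the paper asserts without proof.
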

\begin{proof}
(1) By local class field theory, we have
\begin{equation}
    N_{L_{v,\ab}/k_v}(L_{v,\ab}^\times)=\prod_{w|v} N_{L_{w,\ab}/k_v}(L_{w,\ab}^\times)=\prod_{w|v} N_{L_w/k_v}(L_w^\times)=N(L_v^\times) 
\end{equation}
and get 
\begin{equation}
    [L_{v,\ab}:k_v]=[k_v^\times:N_{L_{v,\ab}/k_v}(L_{v,\ab}^\times)]=[k_v^\times:N(L_v^\times)].
\end{equation}   

(2) 
The open subgroup corresponding to $L_{w,\ab}^\ur=k_v^\ur L_{w,\ab}$ is $U_w:=N_{L_w/k_v}(O_{L_w}^\times)$. If we put $U_v=\cap_{w|v} U_w$, then it corresponds to the field extension $(\wt L_{v,\ab})^\ur$. We have
\[ \Gal((\wt L_{v,\ab})^\ur/L_{w,\ab}^\ur)\simeq \Gal((\wt L_{v,\ab})'/L_{w,\ab}')\simeq U_w/U_v. \]
and hence 
\[ \Gal((\wt L_{v,\ab})'/L'_{v,\ab})\simeq \prod_{w|v} U_w/U_v=\left (\prod_{w|v} U_w \right )/U_v=N(O_{L_v}^\times)/U_w.\]
Therefore, 
\[ [L'_{v,\ab}:k_v']=[O_v^\times: N(O_{L_v}^\times)]. \  \text{\qed}\]
\end{proof}

\begin{rem} In Proposition~\ref{prop:4.1} we make an auxiliary base change of $L_v$ by a sufficiently large unramified extension $k_v'$ so that $\Gal(L_{w,\ab}'/k_v')\simeq [O_v\times: N_{L_w/k_v}(O_{L_w}^\times)]$ and take the intersection $\cap_{w|v} L'_{w,\ab}$. This is necessary even when each field extension $L_w/k_v$ is abelian and totally ramified. Indeed, one has $\Gal(L_w/k_v)\simeq O_v^\times/N_{L_w/k_v}(O_{L_w}^\times)$ and 
\[ \Gal(L_{v,ab}/k_v)\simeq O_v^\times/(N(L_v^\times)\cap O_v^\times).\]
However, we only have the inclusion $N(O_{L_v}^\times)\subset (N(L_v^\times)\cap O_v^\times)$ and no equality in general. 

For example, let $k_v=\Q_\ell$ where $\ell$ is an odd prime. 
By local class field theory, there are exactly two ramified quadratic fields $L_{w_1}$ and $L_{w_2}$. By weak approximation, there exist quadratic extensions $K_1$ and $K_2$ such that $(K_1)_v \simeq L_{w_1}$ and $(K_2)_v \simeq L_{w_1}$. Put $L=K_1\times K_2$ and hence $L_v=L_{w_1}\times L_{w_2}$. We have $N(L_v^\times)=k_v^\times$ and $N(L_v^\times)\cap O_v^\times=O_v^\times$, while $N(O_{L_v}^\times)\subset O_v^\times$ is of index two.  
\end{rem}

\subsection{The global unit norm index $[O_{k, S}^{\times}: N(O_{L, S}^{\times})]$}

\begin{prop}\label{prop:4.2}
Let $F_L$ be a free part of $O_{L,S}^\times$, that is, $F_L$ is a finitely generated free subgroup such that $O_{L, S}^\times = \mu_L \times F_L$, where $\mu_L = \prod \mu_{K_i}$ is the group of roots of unity in $L$.

\begin{enumerate}
    \item We can choose a free part $F_k$ of $O_{k, S}^\times$ such that $O_{k, S}^{\times} = \mu_k \times F_k$ and $N(F_L) \subset F_k$. Then we have $O_{k, S}^{\times}/N(O_{L, S}^{\times}) \simeq \mu_k/N(\mu_L) \times F_k/N(F_L)$.

    \item Moreover, let $\{\xi_j\}$ be a system of fundamental units of $O_{L, S}^{\times}$ and let $\pi_d: F_k \to F_k/(F_k)^d$ be the natural projection, where $d$ is the greatest common divisor of $[K_i: k]$, $1 \leq i \leq r$. Then $F_k/N(F_L) \simeq (F_k/(F_k)^d)/E_L$, where $E_L$ is the subgroup generated by $\pi_d(\xi_j)$ for all~$j$.
\end{enumerate}
\end{prop}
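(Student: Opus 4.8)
The plan is to reduce both parts to the structure of the finitely generated abelian groups $O_{k,S}^\times$ and $O_{L,S}^\times$ provided by Dirichlet's $S$-unit theorem, together with the elementary observation that $N=N_{L/k}$ sends roots of unity to roots of unity, so that $N(\mu_L)\subseteq\mu_k$ automatically. For part (1) I would first use this to see that the norm is compatible with the torsion on the source. The substantive point is to select the free complement $F_k$ on the target so that, in addition, $N(F_L)\subseteq F_k$; I would do this by passing to the free quotient $\bar F_k:=O_{k,S}^\times/\mu_k$, lifting a $\Z$-basis of $\bar F_k$ to $O_{k,S}^\times$ in such a way that the lift contains the images of the fundamental units $\xi_j$ generating $F_L$, and declaring $F_k$ to be the span of this lift. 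Once $N(F_L)\subseteq F_k$ is arranged, the rest is formal: from $N(O_{L,S}^\times)=N(\mu_L)\,N(F_L)$ with $N(\mu_L)\subseteq\mu_k$ and $N(F_L)\subseteq F_k$, the direct product $O_{k,S}^\times=\mu_k\times F_k$ yields
\[ O_{k,S}^\times/N(O_{L,S}^\times)\;\simeq\;\mu_k/N(\mu_L)\,\times\,F_k/N(F_L). \]

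I expect the \emph{main obstacle} to be exactly this compatible choice of $F_k$: one must ensure that $N(F_L)$ meets $\mu_k$ trivially and can be completed to a complement of $\mu_k$, i.e. that the norms of the fundamental units of $O_{L,S}^\times$ remain part of a $\Z$-basis of the free quotient. This is the delicate spot where the torsion of the two unit groups interacts with the norm, and it is where the standing hypotheses on $S$ (which force the relevant unit-norm indices to be finite, hence $N(F_L)$ to have full rank in $F_k$) must enter.

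For part (2) I would show $(F_k)^d\subseteq N(F_L)$, so that $F_k/N(F_L)$ is a quotient of $F_k/(F_k)^d$. Writing $d=\gcd_i[K_i:k]=\sum_i c_i[K_i:k]$ by B\'ezout, for each $u\in F_k$ the $S$-unit $(u^{c_1},\dots,u^{c_r})\in O_{L,S}^\times$, formed from the diagonal images of $u$ in the $K_i$, has norm $\prod_i u^{c_i[K_i:k]}=u^d$. Decomposing it as $\zeta f$ with $\zeta\in\mu_L$ and $f\in F_L$ and invoking part (1), the equality $u^d=N(\zeta)N(f)$ forces the torsion component $N(\zeta)$ to vanish, since $u^d\in F_k$; hence $u^d=N(f)\in N(F_L)$. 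Therefore $(F_k)^d\subseteq N(F_L)=\langle N(\xi_j)\rangle_j$, and reducing modulo $(F_k)^d$ gives
\[ F_k/N(F_L)\;\simeq\;\bigl(F_k/(F_k)^d\bigr)\big/E_L,\qquad E_L=\langle\pi_d(N(\xi_j))\rangle_j, \]
which is the assertion. The only genuine ingredient here is the B\'ezout identity among the degrees $[K_i:k]$; everything else is bookkeeping with the decomposition from part (1).
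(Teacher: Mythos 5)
Your proposal follows essentially the same route as the paper's proof. For part (2) the mechanism is identical: the paper observes $F_L\supseteq\prod_{i=1}^r F_k$ and hence $N(F_L)\supseteq F_k^{[K_1:k]}\cdots F_k^{[K_r:k]}=F_k^d$, which is the same B\'ezout/diagonal-unit device as yours; your version is in fact slightly more careful, since the literal inclusion $F_L\supseteq\prod_i F_k$ need not hold for an arbitrarily chosen free part $F_L$ (diagonal units may only lie in $F_L$ up to roots of unity), and your step of writing the diagonal element as $\zeta f$ and cancelling the torsion component against $F_k\cap\mu_k=1$ repairs exactly that. Your reading of $E_L$ as generated by the classes $\pi_d(N(\xi_j))$ is also the intended one. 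For part (1), your lifted-basis construction and the paper's Zorn's-lemma construction of a maximal free subgroup $F_k\supseteq N(F_L)$ are variants of the same idea, and once such an $F_k$ exists the splitting of the quotient is formal in both accounts.

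The step you leave open in part (1) --- that $N(F_L)$ meets $\mu_k$ trivially, so that a free complement containing it can be found --- is a genuine issue, and your proposed remedy does not address it: the hypotheses on $S$ give finiteness of the unit-norm indices and hence control the \emph{rank} of $N(F_L)$, but the obstruction is \emph{torsion}, not rank. Indeed $N(F_L)\cap\mu_k$ can be nontrivial: take $k=\Q$, $L=K=\Q(\sqrt{2})$, $S$ the archimedean place; every fundamental unit $\xi=\pm(1+\sqrt{2})^{\pm1}$ satisfies $N(\xi)=-1$, so $N(F_L)=\{\pm1\}$ is contained in no free subgroup, for any choice of $F_L$. (In this example the asserted decomposition itself breaks down: $O_k^{\times}/N(O_L^{\times})$ is trivial while $\mu_k/N(\mu_L)$ has order $2$.) You should know, however, that the paper's own proof has the very same gap: it asserts that $N(F_L)$ is a finitely generated \emph{free} subgroup of $O_{k,S}^{\times}$ before invoking Zorn's lemma, which presupposes precisely the torsion-freeness you flagged. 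So your instinct about where the delicacy lies is correct, but neither your sketch nor the paper closes it; the statement needs the additional hypothesis $N(F_L)\cap\mu_k=1$, and granting that hypothesis your argument, like the paper's, goes through.
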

\begin{proof}
    (1) 
    Note that $N(F_L)$ is a finitely generated free subgroup of $O_{k, S}^\times$. By Zorn's lemma, there exists a maximal finitely generated free subgroup $F_k$ of $O_{k, S}^\times$ such that $N(F_L) \subset F_k$. Then 
    \[O_{k, S}^{\times}/N(O_{L, S}^{\times}) 
    = \frac{\mu_k \times F_k}{N(\mu_L) \times N(F_L)} 
    \simeq \mu_k/N(\mu_L) \times F_k/N(F_L).\]

    (2) Note that the group $O_{L,S}^\times=\prod_{i=1}^r O_{K_i,S}^\times$ contains the subgroup $\prod_{i=1}^r O_k^\times$ and hence the free subgroup $F_L$ contains the free subgroup $\prod_{i=1}^r F_k$. Therefore, we have  
    \begin{equation}\label{eq:NFL}
      N(F_L) \supset N\left(\prod_{i=1}^r F_k\right)= F_k^{[K_1:k]} \cdots F_k^{[K_r:k]}=F_k^d,
    \end{equation}
    and $F_k/N(F_L)$ is the quotient of the $F_k/F_k^d$ by its image of $N(F_L)$. This shows statement~(2). \qed
\end{proof}

It is rather easy to compute the index of torsion part $[\mu_k:N(\mu_L)]$ for each given specific case. However, finding a system of fundamental units is a classical problem in number theory, which is known to be difficult in general. However, by Proposition~\ref{prop:4.2}, if the degrees $[K_i:k]$ have only the 
trivial common divisor, then $O_{k, S}^{\times}/N(O_{L, S}^{\times})=\{1\}$,  cf.~\eqref{eq:NFL}.

If $k=\Q$ and $S$ is the set of archimedean places, then we have that $[O_{k, S}^\times:N(O_{L, S}^\times)]=[\Z^\times:N(O_{L}^\times)]$ is $1$ or $2$. Moreover, 
\[ [O_{k, S}^\times:N(O_{L, S}^\times)]=1 \iff -1\in N_{K_i/k} (O_{K_i}^\times) \text{ for some } i. \]
In particular, if one of the degrees $[K_i:k]$ is odd, then $[O_k^\times:N(O_L^\times)]=1$. When $K$ is a real quadratic field, one can use continued fractions to compute the fundamental unit $\epsilon_K$ (cf.~\cite{olds:63}) and determine whether $N(\epsilon_K)=-1$. In the case where $K=\Q(\sqrt{p})$ and $p$ is a prime, we know (from \cite[Lemma 2.4]{xue-yang-yu:num_inv}) that
\begin{align}\label{eq:fu}
    N(\epsilon_K) = 
    \begin{cases}
    1, & \text{if $p \equiv 3\! \pmod{4}$}; \\
    -1, & \text{otherwise}.
    \end{cases}
\end{align}

However, there is no known direct determination of $N(\epsilon_K)$ 
from the discriminant of $K$ as \eqref{eq:fu} in general. 

\subsection{The Tate-Shafarevich group $\Sha(L/k)$}

The most interesting and involved term in the class number formula for  multinorm-one tori is $|\Sha(L/k)|$.
In this section we organize some results about computation of $\Sha(L/k)$. In particular, we introduce a criterion for $\Sha(L/k) = 0$. 

First, we have an exact sequence of $k$-tori
\begin{center}
    \begin{tikzcd}
    1 \arrow[r] & T_{L/k} \arrow[r, "\iota"] & R_{L/k} \mathbb{G}_{m, L} \arrow[r, "N"] & \mathbb{G}_{m, k} \arrow[r] & 1.
    \end{tikzcd}
\end{center}
By Hilbert's Theorem 90, we have an isomorphism 
\begin{equation}\label{eq:4.1}
   k^\times/N_{L/k}(L^\times) \xrightarrow[]{\,\sim\,} H^1(k, T_{L/k}). 
\end{equation}
Taking the kernel of the local-global map on each side, 
we have the isomorphisms
\begin{equation}\label{eq:4.2}
   \Sha(L/k) \simeq \Sha^1(k, T_{L/k}) \simeq \Sha^2(k, \widehat{T}_{L/k})^{\vee}, 
\end{equation}
where the second isomorphism is given by Poitou-Tate duality \cite[Theorem 6.10]{platonov-rapinchuk}.


Let 
\begin{equation}\label{eq:4.3}
    \Sha^i_{\omega}(k, \wh T):=\left\{[C]\in H^i(k, \wh T)\, |\, [C]_v=0 \text{  for almost all places $v$ of $k$ } \right\},
\end{equation}
where $[C]_v$ denotes the class in $H^i(k_v, \wh T)$ under the restriction map $H^i(k, \wh T)\to H^i(k_v, \wh T)$. 
We shall utilize the following exact sequence
\begin{align*}
    0 \longrightarrow \Sha^2(k,\widehat{T}) \longrightarrow \Sha^2_{\omega}(k,\widehat{T}) \longrightarrow A(T)^{\vee} \longrightarrow 0,
\end{align*}
where $T(k) \hookrightarrow \prod_v T(k_v)$ embeds diagonally and $A(T) := \prod_v T(k_v)/\overline{T(k)}$ is the group measuring the defect of weak approximation of $T$, or its dual version
\begin{equation}
    \label{eq:4.13}
        0 \longrightarrow A(T) \longrightarrow \Sha^2_{\omega}(k,\widehat{T})^{\vee} \longrightarrow \Sha^2(k, \widehat{T})^{\vee} \longrightarrow 0.
\end{equation}
Here $\Sha^2(k,\widehat{T})^\vee \simeq \Sha^1(k,T)$ and $\Sha^2_{\omega}(k, \widehat{T}) \simeq H^1(k, \mathrm{Pic}(\overline{T}))$, where $\overline{T}$ is a smooth compactification of $T$, by \cite[Theorem 6]{voskresenskii1970birational}.
By definition, the local-global principle for $T$ holds if $\Sha^1(k, T)=0$, and  \emph{weak approximation} for $T$ holds if $A(T) = 0$. 
From the second exact sequence \eqref{eq:4.13} we see that
\begin{align*}
    \Sha^2_{\omega}(k,\widehat{T}) = 0 \Longleftrightarrow A(T) = 0 \text{ and } \Sha^1(k,T) = 0.
\end{align*}

Recall that \emph{Hasse norm principle} (HNP) holds for the \'etale $k$-algebra $L/k$ if $\Sha(L/k)=0$ or $\Sha^2(k, \widehat T_{L/k}) = 0$ by \eqref{eq:4.2}.

In the following, we collect some results from the literature.
For a finite separable extension $K/k$ of global fields, we denote by $K^c$ the Galois closure of $K$ over $k$.
We also write $\Sha_{\omega}(L/k)$ for $\Sha^1_\omega(k, T_{L/k})$.

\begin{thm}\label{thm:4.4}
Let $L=\prod_{i=1}^r K_i$ be an \'etale $k$-algebra and $T_{L/k}$ the associated multinorm-one torus.  
The Hasse Norm Principle (HNP) holds for $L/k$, that is, $\Sha(L/k) = 0$, if one of the following conditions holds. 

\noindent\emph{Case 1. Let $r = 1$ and $L = K$.}
\begin{enumerate}[label = (1.\alph*)]
    \item $K/k$ is Galois with Galois group $G$ such that $\Sha^3(G, \Z) = 0$.
    \item $[K:k]$ is a prime.
    \item $[K:k] = n$ and $\Gal(K^c/k)$ is isomorphic to the dihedral group $D_n$.
    \item $[K:k] = n$ and $\Gal(K^c/k)$ is isomorphic to the symmetric group $S_n$.
    \item $[K:k] = n$ and $\Gal(K^c/k)$ is isomorphic to the alternating group $A_n$, for $n \geq 5$.
\end{enumerate}
\emph{Case 2. Let $r = 2$ and $L = K_1 \times K_2$.} We set $F = K_1 \cap K_2$ and let $\Sha_\omega(F/k)=\Sha_\omega^1(k, T_{F/k})$ denote the quotient of the subgroup of $k^\times$ which is a norm locally almost everywhere, by $N_{F/k}(F^\times)$.
\begin{enumerate}[label = (2.\alph*)]
    \item $K_1$ is a cyclic extension and $K_2$ is an arbitrary finite separable extension.
    \item $K_1^c \cap K_2^c = k$.
    \item $K_1$ and $K_2$ are abelian extensions over $k$, and $\Sha(F/k) = 0$.
    \item $\Sha_{\omega}(F/k) = 0$.
\end{enumerate}
\emph{Case 3. General $r\ge 2$.}
\begin{enumerate}[label = (3.\alph*)]
    \item $K_1, \dotsc, K_r$ are Galois over of $k$, the field $K_1 \dotsb K_i \cap (K_{i+1} \dotsb K_r)$ equals to the intersection $F := \bigcap_{i = 1}^r K_i$ for some $1 \leq i \leq r-1$, and $\Sha_{\omega}(F/k) = 0$.
    \item $K_1, \dotsc, K_r$ are Galois over of $k$, and $K_1 \dotsb K_i \cap (K_{i+1} \dotsb K_r) = k$ for some $1\le i\le r-1$.
    \item $K_1, \dotsc, K_r$ are distinct extensions over $k$ of degree $p$, where $p$ is a prime, with $K_i$ is cyclic for some $i$, and either the composition $\wt F := K_1 \dotsb K_r$ has degree $> p^2$ over $k$ or one local degree of $\wt F$ is $> p$.
\end{enumerate}
\end{thm}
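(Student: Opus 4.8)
The plan is to reduce every case to the vanishing of a single cohomological obstruction and then to treat the three families of hypotheses by the structural results recorded in the literature. The common starting point is the chain of isomorphisms $\Sha(L/k)\simeq \Sha^1(k,T_{L/k})\simeq \Sha^2(k,\widehat{T}_{L/k})^{\vee}$ from \eqref{eq:4.2}, so in each case it suffices to prove that $\Sha^2(k,\widehat{T}_{L/k})=0$. Several of the hypotheses are instead phrased through the a priori larger group $\Sha_\omega(L/k)=\Sha^1_\omega(k,T_{L/k})\supseteq\Sha(L/k)$, whose vanishing is therefore sufficient (and, by the discussion around \eqref{eq:4.13}, simultaneously yields weak approximation for $T_{L/k}$).

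First I would dispose of Case~1, where $L=K$ is a field. Passing to the Galois closure $K^c$ with $G=\Gal(K^c/k)$ and $H=\Gal(K^c/K)$, the lattice $\widehat{T}_{K/k}$ is built from the permutation module $\Z[G/H]$. In the Galois case $H=1$, the exact sequence $0\to\Z\to\Z[G]\to\widehat{T}_{K/k}\to 0$ of character lattices together with the cohomological triviality of the induced module $\Z[G]$ gives $H^2(G,\widehat{T}_{K/k})\simeq H^3(G,\Z)$; localizing at each place and using that the decomposition groups realized by Chebotarev are exactly the cyclic subgroups of $G$, one gets $\Sha^2(k,\widehat{T}_{K/k})\simeq \ker\bigl(H^3(G,\Z)\to\prod_v H^3(G_v,\Z)\bigr)=\Sha^3(G,\Z)$, the subgroup of classes restricting to zero on every cyclic subgroup (see \cite{platonov-rapinchuk} for this machinery). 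This proves (1.a). The subcases (1.b)--(1.e) then reduce to verifying the vanishing of this localized $H^3$, now for the module attached to $(G,H)$, when $[K:k]$ is prime or when $\Gal(K^c/k)$ is $D_n$, $S_n$, or $A_n$ with $n\ge 5$; these are explicit group-cohomology computations that I would cite from the literature rather than reproduce.

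For Cases~2 and~3 I would invoke the product-structure results for multinorm tori. The key input is a dévissage expressing $\Sha(L/k)$, or its variant $\Sha_\omega(L/k)$, in terms of the corresponding invariant of the intersection field $F=\bigcap_i K_i$; this is the mechanism underlying (2.c), (2.d), (3.a), and (3.b). Concretely, (2.a) is the theorem of Pollio--Rapinchuk \cite{pollio-rapinchuk} that HNP holds as soon as one factor is cyclic; (2.b) and (3.b) follow from linear disjointness of the Galois closures, which splits $\widehat{T}_{L/k}$ into induced pieces carrying no common locally trivial class (cf.~H\"urlimann \cite{hurlimann}); and (2.c), (2.d), (3.a), (3.c) follow from the computations of Bayer-Fluckiger--Lee--Parimala \cite{BLP19,lee2022tate}, with the degree-$p$ case (3.c) needing the extra numerical hypothesis on $[\wt F:k]$ or on a local degree. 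In each case the task is to check that the stated hypotheses match the precise assumptions under which these references establish vanishing.

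I expect the main obstacle to be organizational rather than conceptual: assembling the correct normalizations so that all three families of hypotheses feed into the single statement $\Sha^2(k,\widehat{T}_{L/k})=0$, and in particular keeping straight the distinction between $\Sha$ and $\Sha_\omega$ (equivalently, between HNP alone and HNP together with weak approximation). The genuinely nontrivial mathematical content is imported, namely the $H^3$ computations for $S_n$, $A_n$, and $D_n$ in Case~1 and the dévissage to the intersection field $F$ in Cases~2--3. I would therefore present the argument as a reduction to \eqref{eq:4.2} and \eqref{eq:4.13} followed by precise citations, writing out in full only the Galois computation yielding (1.a), which is self-contained.
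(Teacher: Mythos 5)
Your overall strategy coincides with the paper's: after the identification $\Sha(L/k)\simeq\Sha^1(k,T_{L/k})\simeq\Sha^2(k,\widehat{T}_{L/k})^{\vee}$ of \eqref{eq:4.2}, the proof of this theorem is nothing more than matching each hypothesis to the reference that establishes the corresponding vanishing, and your self-contained derivation of (1.a) (the sequence $0\to\Z\to\Z[G]\to\widehat{T}_{K/k}\to 0$, cohomological triviality of $\Z[G]$, hence $\Sha^2(k,\widehat{T}_{K/k})\simeq\Sha^3(G,\Z)$) is exactly Tate's theorem, which the paper simply cites from \cite{platonov-rapinchuk}. Your use of $\Sha(L/k)\subseteq\Sha_\omega(L/k)$ to handle the hypotheses phrased via $\Sha_\omega$ is also the paper's mechanism.

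However, since the entire content of the proof is correct attribution, the scrambled citations in Cases 2--3 are a real defect, not a cosmetic one. You attribute (2.a) to Pollio--Rapinchuk, but their theorem \cite{pollio-rapinchuk} is precisely (2.b), the linearly disjoint case $K_1^c\cap K_2^c=k$; the cyclic-factor case (2.a) is due to H\"urlimann \cite[Proposition 3.3]{hurlimann} when $K_2/k$ is Galois and to \cite[Proposition 4.1]{BLP19} in general, so your assignment of (2.b) to H\"urlimann via a splitting of $\widehat{T}_{L/k}$ into induced pieces is also not how the cited literature proves it. Similarly, (2.c) is Pollio's theorem \cite{pollio} that $\Sha(L/k)=\Sha(F/k)$ for abelian $K_1,K_2$, while (2.d) and (3.a) rest on Demarche--Wei \cite{demarche-wei} (Theorem 6 applied with $I=\{1\}$, $J=\{2\}$, and Example 9 respectively, yielding $\Sha_\omega(L/k)\simeq\Sha_\omega(F/k)$), and (3.b) is \cite[Theorem 1]{demarche-wei}; of the items you assign to Bayer-Fluckiger--Lee--Parimala, only (3.c) actually comes from \cite[Proposition 8.5]{BLP19}. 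Two smaller points: your parenthetical in (1.a) identifying $\Sha^3(G,\Z)$ with the classes vanishing on all \emph{cyclic} subgroups is only one inclusion in general (decomposition groups at ramified places need not be cyclic), though this is harmless for the sufficiency direction; and since the theorem is stated over an arbitrary global field, you should note, as the paper does, that the references for (1.a), (1.b), (1.d), (1.e) are written for number fields but their Galois-cohomological arguments carry over to function fields.
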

\begin{proof}
\emph{Case 1. $r = 1$ and $L = K$.}
\begin{enumerate}[label = (1.\alph*)]
    \item This is a theorem of Tate, cf.~\cite[Theorem 6.11]{platonov-rapinchuk}.
    \item See Bartels \cite[Lemma 4]{bartels1981179}, cf.~\cite[Proposition 6.3]{platonov-rapinchuk}.
    \item See Bartels \cite[Satz 1]{bartels1981arithmetik}.
    \item This is a result of B.~Kunyavskii and V.~Voskresenskii; see \cite{voskresenskii}, cf.~\cite[p.~2]{macedo2020500}.
    \item See Macedo \cite[Theorem 1.1]{macedo2020500}.
\end{enumerate}
We remark the references for (1.a), (1.b), (1.d) and (1.e) are stated for number fields. However, since the methods in loc.~cit use Galois cohomology and group theory, the proofs also apply to the global function field case.  

\noindent\emph{Case 2. $r = 2$ and $L = K_1 \times K_2$.}
\begin{enumerate}[label = (2.\alph*)]
    \item The case where $K_2/k$ is Galois is  proved by H\"urlimann in \cite[Proposition 3.3]{hurlimann} and the general case is proved in \cite[Proposition 4.1]{BLP19}.
    \item Pollio and Rapinchuk proved that this condition implies $\Sha(L/k) = 0$ in \cite{pollio-rapinchuk}.
    \item In \cite{pollio}, Pollio proved that if $K_1$ and $K_2$ are abelian extensions of $k$, then $\Sha(L/k) = \Sha(F/k)$.
    \item This follows from Demarche and Wei's work~\cite{demarche-wei}. Applying \cite[Theorem 6]{demarche-wei} to the case $I = \{1\}$ and $J = \{2\}$, we obtain $\Sha_{\omega}(L/k) = \Sha_{\omega}(F/k)$. In particular, $\Sha_{\omega}(L/k) = 0$ implies $\Sha(L/k) = 0$.
\end{enumerate}

\noindent\emph{Case 3. General $r\ge 2$.} 
\begin{enumerate}[label = (3.\alph*)]
    \item In \cite[Example 9]{demarche-wei}, Demarche and Wei proved that if $K_1, \dotsc, K_r$ are Galois extensions of $k$ and $(K_1 \dotsb K_i) \cap (K_{i+1} \dotsb K_r) = F = \bigcap_{i = 1}^r K_i$ for some $1 \leq i \leq r$, then $\Sha_{\omega}(L/k) \simeq \Sha_{\omega}(F/k)$.
    \item This condition originates from \cite[Theorem 1]{demarche-wei}.
    \item This is an application of Bayer-Fluckiger, T.-Y. Lee and Parimala's \cite[Proposition 8.5]{BLP19}. Note that when $r = 2$ this recovers condition (2.a). \qed 
\end{enumerate}
\end{proof}

\begin{rem}\label{rem:morishita}
   Theorem~\ref{thm:4.4}(2.b) implies that the term $|\Sha_k(T')|$ in the main theorem of \cite[p.~135]{morishita:nagoya1991} is equal to $1$.
\end{rem}

\begin{cor} \label{cor:refine}
Let the notation be as in Theorem~\ref{thm:ESLk} and in Proposition~\ref{prop:4.1}. Assume one of the conditions in Theorem~\ref{thm:4.4} holds. 
Then 
\begin{equation}
    \label{eq:ESLk_refine}
   E_S(L/k) 
   = \frac{\prod_{v\in S}[L_{v,\ab}:k_v] \cdot \prod_{v\in R(L/k)\setminus S} e_v(L/k)} {[L_{ab}:k]\cdot [ {O}_{k, S}^{\times}: N( {O}_{L, S}^{\times})]}, 
  \end{equation}
 
  \begin{equation}
    \label{eq:ES+Lk_refine}
   E_S^+(L/k) 
   = \frac{\prod_{v\in S}[L_{v,\ab}:k_v] \cdot \prod_{v\in R(L/k)\setminus S} e_v(L/k)}{[L_{ab}:k] \cdot q(\phi)\cdot [ {O}_{k, S}^{\times +}: N( {O}_{L, S}^{\times +})]}, 
  \end{equation}
and 
   \begin{equation}
    \label{eq:ES0Lk_refine}
   E^0(L/k) 
   = \frac{q(\phi^0)\cdot \prod_{v\in R(L/k)} e_v(L/k)}{[L_{ab}:k] \cdot [\mathbb{F}_q^{\times} : N(\prod_i \mathbb{F}_{q_i}^{\times})] }, 
  \end{equation} 
where $L_{v,\ab}$ is a finite abelian extension of $k_v$ defined in \eqref{eq:Lvab}, $e_v(L/k)$ is defined in \eqref{eq:ev}, and $R(L/k)$ is the finite set of finite places $v$ of $k$ for which none of the places $w|v$ of $L$ is unramified in $L/k$.
\end{cor}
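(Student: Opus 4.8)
The plan is to feed the hypotheses into the three master formulas \eqref{eq:ESLkformula}, \eqref{eq:ES+Lkformula} and the formula of Theorem~\ref{thm:E0Lk}, and then rewrite the local norm-index factor by means of Proposition~\ref{prop:4.1}. The first input is the hypothesis itself: each of the conditions listed in Theorem~\ref{thm:4.4} guarantees that the Hasse norm principle holds, i.e.\ $\Sha(L/k)=0$. Hence $\abs{\Sha(L/k)}=1$, and this factor simply disappears from all three formulas.

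Next I would expand the global local-norm index as the product of its local contributions,
\[
[U_{k,S}:N(U_{L,S})]=\prod_{v\in S}[k_v^\times:N(L_v^\times)]\cdot\prod_{v\notin S}[O_v^\times:N(O_{L_v}^\times)],
\]
and invoke Proposition~\ref{prop:4.1}: part~(1) rewrites each $S$-factor as $[L_{v,\ab}:k_v]$, while part~(2) rewrites each non-$S$ factor (all such $v$ being finite) as $e_v(L/k)=[L'_{v,\ab}:k_v']$, using the definition \eqref{eq:ev}. This already gives
\[
[U_{k,S}:N(U_{L,S})]=\prod_{v\in S}[L_{v,\ab}:k_v]\cdot\prod_{v\notin S}e_v(L/k).
\]

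The remaining point is that the second product is finite and supported on $R(L/k)$. By the remark following \eqref{eq:Lpvab}, $e_v(L/k)=1$ as soon as some place $w|v$ is unramified in $L/k$; equivalently, $e_v(L/k)\ne 1$ forces $v\in R(L/k)$. Since $L/k$ is unramified at all but finitely many places, $\prod_{v\notin S}e_v(L/k)$ collapses to the finite product $\prod_{v\in R(L/k)\setminus S}e_v(L/k)$. Substituting this together with $\abs{\Sha(L/k)}=1$ into \eqref{eq:ESLkformula} and \eqref{eq:ES+Lkformula} yields \eqref{eq:ESLk_refine} and \eqref{eq:ES+Lk_refine} verbatim.

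For the degree-zero function-field identity \eqref{eq:ES0Lk_refine} the bookkeeping is identical with $S=\emptyset$: here every place of $k$ is finite and $U_k=\prod_v O_v^\times$, so $[U_k:N(U_L)]=\prod_v[O_v^\times:N(O_{L_v}^\times)]=\prod_v e_v(L/k)=\prod_{v\in R(L/k)}e_v(L/k)$ by Proposition~\ref{prop:4.1}(2) and the same unramifiedness argument. Plugging this and $\abs{\Sha(L/k)}=1$ into the formula of Theorem~\ref{thm:E0Lk} produces \eqref{eq:ES0Lk_refine}. I expect no genuine obstacle: all the substantive content already resides in Theorems~\ref{thm:ESLk} and \ref{thm:E0Lk}, Proposition~\ref{prop:4.1}, and the HNP results of Theorem~\ref{thm:4.4}, so the corollary is pure substitution and index bookkeeping. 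The only step demanding a little care is correctly identifying the index set $R(L/k)$ and tracking which local factors survive in the $S$-adelic versus the degree-zero settings.
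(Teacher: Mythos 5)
Your proposal is correct and matches the paper's proof, which cites exactly the same ingredients (Theorems~\ref{thm:ESLk} and \ref{thm:E0Lk}, Proposition~\ref{prop:4.1}, and Theorem~\ref{thm:4.4}) without writing out the substitution; you correctly supply the two small points left implicit, namely that $e_v(L/k)=1$ whenever some $w\mid v$ is unramified so the product collapses to $R(L/k)\setminus S$ (resp.\ $R(L/k)$ when $S=\emptyset$), and that in the function-field case all places are finite so Proposition~\ref{prop:4.1}(2) applies everywhere.
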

\begin{proof}
   This follows from Theorems~\ref{thm:ESLk} and \ref{thm:E0Lk}, Proposition~\ref{prop:4.1} and Theorem~\ref{thm:4.4}. \qed
\end{proof}

As in Theorem~\ref{thm:4.4}, there have already been many affirmative results for determining the HNP for $L/k$. However, when the HNP for $L/k$ fails, results for computing $\Sha(L/k)$ are only sporadic. 

For $r=1$ and $K/k$ a Galois extension with group $G$, a theorem of Tate gives us a general method for computing $\Sha(K/k)$ through the canonical isomorphism 
\[ \Sha(K/k)\simeq \Sha^3(G,\Z).\]
Via the natural isomorphism $H^3(G,\Z)\simeq \Hom(H_2(G,\Z), \Q/\Z)$, this reduces the problem to computing the Schur multiplier $M(G)\simeq H_2(G,\Z)$ of $G$ and computing the cokernel of the map
\[ \bigoplus_{v} M(G_v) \to M(G), \]
where $v$ runs through all places of $k$ ramified in $K$ and $G_v$ denotes the ramification group of $v$. 

For higher $r$, Bayer-Fluckiger, Lee and Parimala \cite{BLP19} made a breakthrough for computing $\Sha(L/k)$ in the case where one factor of $L$ is cyclic over $k$. Morever, when every factor of $L$ is a cyclic extension of $k$, the authors gave a necessary and sufficient condition for $\Sha(L/k)=0$ under a mild condition. Extending the work \cite{BLP19}, Lee~\cite{lee2022tate} gave a general formula for computing $\Sha(L/k)$ when all factors of $L$ have $p$-power degrees. Combining Lee's result and a reduction result \cite[Proposition 8.6]{BLP19}, the group $\Sha(L/k)$ is essentially known when all factors of $L$ are cyclic.

As the last part of this article, we present briefly Lee's formula for $\Sha(L/k)$, and describe a further result \cite{HHLY} for computing a certain class of $\Sha(L/k)$.
Let us write $L=\prod_{i=0}^m K_i$ and assume that each $K_i/k$ is cyclic and that $\cap_{i=0}^r K_i=k$.
By \cite[Proposition 8.6]{BLP19}, each $p$-primary subgroup $\Sha(L/k)(p)$ is isomorphic to $\Sha(L(p))$, where $L(p)$ is the maximal \'etale $k$-subalgebra of $L$ of $p$-power degree. Thus, without loss of generality we may assume that each $K_i/k$  has $p$-power degree, say degree $p^{\epsilon_i}$. 

For any $0\le i,j\le m$, 
set
\begin{enumerate}
    \item[(i)]
    $p^{e_{i,j}}=[K_{i}\cap K_{j}:k]$, and
    \item[(ii)]
    $e_{i}=\epsilon_{0}-e_{0,i}$.
\end{enumerate}
We may assume that $e_{i}\geq e_{i+1}$ and that $\epsilon_0=\min_{0\le i\le m}\{\epsilon_i\}$. For $0\leq r\leq\epsilon_{0}$, set
\[ \text{$U_{r}:=\{i\in\mathcal{I}|\text{ }e_{0,i}=r\}$}. \]
\begin{defn}
(1) Let $i,j \in \mathcal{I}:=\{1,\dots, m\}$ and $l$ be a nonnegative integer. We say that $i,j$ are \emph{$l$-equivalent} , denoted by $i\mathop{\sim}\limits_{l}j$, if $e_{i,j}\geq l$ or $i=j$. For any nonempty subset $c$ of $\mathcal{I}$, let $n_{l}(c)$ be the number of $l$-equivalence classes of $c$.

(2) For each subset $c \subseteq\mathcal{I}$ with $|c|\geq 1$, the \emph{level} of $c$ is defined by 
\[L(c):=\min \{e_{i,j}: i,j\in c\}. \]  
\end{defn}


In \cite[Theorem 6.5]{lee2022tate}, Lee proves the following general formula:  

\begin{equation}\label{eq:formula}
  \Sha(L/k) \cong\mathop{\bigoplus}\limits_{r\in\mathcal{R}\setminus\{0\}}\mathbb{Z}/p^{{\Delta_{r}}-r}\mathbb{Z}\mathop{\bigoplus}\limits_{r\in\mathcal{R}}\mathop{\bigoplus}\limits_{l\geq L(U_{r})}\mathop{\bigoplus}\limits_{c\in U_r/\mathop{\sim}\limits_{l}}(\mathbb{Z}/p^{f_{c}-r}\mathbb{Z})^{n_{l+1}(c)-1},  
\end{equation}
where $\mathcal{R}=\{0\leq r\leq\epsilon_0|\text{ } U_{r}\neq\emptyset\}$. We refer to \cite[Sections 4 and 5]{lee2022tate} (also see \cite[Section 2]{HHLY}) for the definitions of the \emph{patching degree} $\Delta_{r}$ of $U_r$ and of the \emph{degree of freedom} $f_c$ of each $l$-equivalence class $c$.

In \cite{HHLY}, Huang, Liang and the present authors investigate the invariants $\Delta_r$ and $f_c$ in Lee's formula when $L=\prod_{i=0}^m K_i$ is assumed to be of Kummer type, namely each cyclic extension $K_i$ is of the form $k(\alpha^{1/p^{\epsilon_i}})$ for some $\alpha\in k^\times$. A basic idea is to describe these invariants in terms of a combinatorial way. 
The authors also implemented computer programs for computing the $\Sha(L/k)$ in the following cases:
\begin{itemize}
    \item $k=\Q(\zeta_{p^n})$ is a $p^n$th cyclotomic field extension; \item $F:=k(\ell_1^{1/p^n},\ell_2^{1/p^n})$ 
       is a bicyclic extension over $k$ with distinct rational primes $\ell_1$ and $\ell_2$; and 
     \item each $K_i$ is a cyclic subextension of $F$, that is, $K_i=k(\ell_1^{a_i/p^n}\ell_2^{b_i/p^n})$ for some integers $0\le a_i,b_i<p^n$. 
\end{itemize}
The programs have input data: $p, n, \{a_i,b_i\}_{0\le i\le m}$, and compute several invariants including $\Delta_r$, $c$, $n_l(c)$ and $f_c$ in Lee's formula \eqref{eq:formula}. 
The programs use the mathematical software SageMath and can be found on
\begin{center}
    \url{https://github.com/hfy880916/Tate-Shafarevich-groups-of-multinorm-one-torus}.
\end{center}

\begin{example}\label{ex:1}
We put $p = 3$ and $n = 3$, so $k = \Q(\zeta_{27})$. Choose the primes $\ell_1 = 5$ and $\ell_2 = 19$. We consider the multinorm-one torus defined by the following extensions over $k$: $K_0 = k(\sqrt[27]{5})$, $K_1 = k(\sqrt[27]{5 \times 19})$, $K_2 = k(\sqrt[27]{5^2 \times 19^3})$, $K_3 = k(\sqrt[27]{5^3 \times 19^5})$, $K_4 = k(\sqrt[27]{5^5 \times 19^{11}})$. We list $a_i$ and $b_i$ as follows:
\begin{align*}
    a_0 &= 1, & a_1 &= 1, & a_2 &= 2, & a_3 &= 3, & a_4 &= 5, \\
    b_0 &= 0, & b_1 &= 1, & b_2 &= 3, & b_3 &= 5, & b_4 &= 11.
\end{align*}
Using Lee's formula \eqref{eq:formula} and the computer program, we compute the Tate-Shafarevich group
\begin{align*}
    \Sha(L/k) \simeq   (\Z/3\Z)^3.
\end{align*}
\end{example}

\begin{example}\label{ex:2}
Let $p, n, k, \ell_1, \ell_2, m$ be the same as in Example~\ref{ex:1}. Consider a different multinorm-one torus defined by the following field extensions: $K_0 = k(\sqrt[27]{5})$, $K_1 = k(\sqrt[27]{5 \times 19})$, $K_2 = k(\sqrt[27]{5^2 \times 19^3})$, $K_3 = k(\sqrt[27]{5^4 \times 19^9})$, $K_4 = k(\sqrt[27]{5^{10}\times 19^{19}})$. We list $a_i$ and $b_i$ as follows:
\begin{align*}
    a_0 &= 1, & a_1 &= 1, & a_2 &= 2, & a_3 &= 4, & a_4 &= 10, \\
    b_0 &= 0, & b_1 &= 1, & b_2 &= 3, & b_3 &= 9, & b_4 &= 19.
\end{align*}
In this case we obtain $\Sha(L/k) \simeq  \Z/3\Z$.
\end{example}

In the first example $K_i$ are linearly disjoint. Our computation result agrees with \cite[Proposition 7.3]{lee2022tate}. In the second example some of $K_i$ are not linearly disjoint so there are some contributions from $U_r$ for $r\ge 1$. We refer the reader to \cite{HHLY} for the details. 

\section*{Acknowledgments}
The present project started from the authors' participation of an undergraduate research program in the National Center for Theoretical Sciences (NCTS). They acknowledge the NCTS for the stimulating environment.
The authors thank Jiangwei Xue for helpful discussions and are grateful to Valentijn Karemaker for her careful reading of an earlier draft and helpful comments.
Yu was partially supported by the NSTC grant 109-2115-M-001-002-MY3 and the Academia Sinica Investigator Award AS-IA-112-M01.


\bibliographystyle{plain}

\def\cprime{$'$}

\end{document}